\def\A{          \mathcal A}
\def\m{             \mathfrak m}
\def\IN{     \NN}    
\def\IR{     \RR}              
\newcommand{\NN}{{\mathbb N}}
\newcommand{\RR}{{\mathbb R}}
\newcommand{\TT}{{\mathbb T}}
\newcommand{\IT}{{\mathbb T}}
\newcommand{\ZZ}{{\mathbb Z}}
\newcommand{\DR}{{\mathcal DR}}
\newtheorem{theo}{\sc Theorem}[section]
\newtheorem{proposition}[theo]{\sc Proposition}
\newtheorem{lemma}[theo]{\sc Lemma}
\newtheorem{sublemma}[theo]{\sc Lemma}
\newtheorem{coro}[theo]{\sc Corollary}
\newtheorem{corollary}[theo]{\sc Corollary}
\theoremstyle{definition}
\newtheorem{defn}[theo]{\sc Definition}
\theoremstyle{remark}
\newtheorem{remark}[theo]{\sc Remark}
\numberwithin{equation}{section}
\begin{document}

\title{Remarks On Shrinking Target Properties}
\author{Jimmy Tseng}
\address{Jimmy Tseng, Department of Mathematics, The Ohio State University, Columbus, OH 43210}
\email{tseng@math.ohio-state.edu}

\begin{abstract}  
This paper defines and describes a few (related) notions of shrinking target property.  We show that simultaneous expanding circle maps have a certain shrinking target property, but that circle homeomorphisms and isometries of complete, separable metric spaces do not have any shrinking target property.  

\end{abstract}

\maketitle

\tableofcontents

\section{Introduction}\label{chapIntro}

Let $(M, \mu, G, T)$ denote a measure-preserving dynamical system on a metric space $M$ with a measure $\mu$, a countable semigroup $G$, and a homomorphism $T$ from $G$ into the semigroup of measure-preserving self-maps of $M$ (these maps are denoted by $T^g:M \rightarrow M$ for $g \in G$).  We also require the domain of $\mu$ to contain the Borel $\sigma$-algebra and $\mu(M)$ to be finite.  In this paper, we study a fundamental type of long-term behavior of such a system, namely discerning which sequences of balls do almost all orbits intersect infinitely often.  Such sequences can be used to characterize the system; and such characterizations are called shrinking target properties.  

Shrinking target properties (partly) arise from the well-known Borel-Cantelli lemma (see Lemma~1.1 of~\cite{CK} for the statement).  For our system $(M, \mu, G, T)$, that lemma (convergence case) says that almost no points are in a sequence of measurable sets infinitely often if the measures of these sets are summable.  If, alternatively, the measures of these sets are not summable, then one cannot make general (nontrivial) assertions for all dynamical systems.  However, it is useful (see, for example,~\cite{KM}) to identify those systems for which almost all points are in a sequence of measurable sets infinitely often (the measures of the sets must not be summable) and, preferably, not just for a few, but for a large family of such sequences; this desire to find such families gives rise to the shrinking target properties defined in Section~\ref{secMD}.  The study of such systems is, variously, called the study of dynamical Borel-Cantelli lemmas (for example,~\cite{KM} or~\cite{Ma}), quantitative Borel-Cantelli lemmas (\cite{Ph}), logarithm laws (for example,~\cite{Su},~\cite{KM}, or~\cite{AM}), hitting time (alternatively, waiting time; see, for example,~\cite{GP}), and shrinking target properties (for example,~\cite{Fa},~\cite{Ts},~\cite{BHKV},~\cite{T2},~\cite{ET}, or a survey of results,~\cite{At}).\footnote{The earliest known result is in~\cite{Ku}.  The term ``shrinking target,'' however, first appears in~\cite{HV}.}   

\subsection{Definitions}\label{secMD}  Let $(M, \mu, G, T)$ be as above and fix $s \geq 1$.  A sequence of measurable sets ${\lbrace  A_g \rbrace}_{g \in G }$ such that 
\begin{eqnarray} \label{div2} \sum_{g\in G}(\mu (A_g))^s =  \infty 
\end{eqnarray}
is called a \textit{Borel-Cantelli (BC) sequence} for $T$ if 
$\mu \left(\lim \sup T^{-g}A_g\right)=\mu(M)$.  (Note that, for $g \in G$, the notation $T^{-g}$ denotes the inverse of the map $T^g:M\rightarrow M$.)
A \textit{radius sequence} is a function $r:G \rightarrow \RR_{\geq 0}$.  Let $r_g := r(g)$, and denote the set of radius sequences by ${\mathcal R}(G)$.  An \textit{admissible set of radius sequences} ${\mathcal A}(G)$ is a nonempty subset of ${\mathcal R}(G)$.  

Let us state a general definition, which we informally call the \textit{admissible shrinking target property (ASTP)} and from which we will specify distinguished special cases:


\begin{defn}
Let $s\geq1$.  The dynamical system $(M, \mu, G, T)$ has the \textit{$(s,\A(G))$-shrinking target property ($(s,\A(G))$-STP)} if, for any $x \in M$ and any $r \in {\mathcal A}(G)$ such that $A_g := B(x, r_g)$ satisfies (\ref{div2}), $\{A_g \}$ is BC for $T$.
\end{defn}

Note that if a sequence satisfies (\ref{div2}) for $s >1$, then it also satisfies (\ref{div2}) for $s=1$.  Therefore, the convergence case of the Borel-Cantelli lemma does not apply to ASTP.  The divergence case of the lemma is also not useful since it requires independence. 

The notion of ASTP conveniently encapsulates three existing notions, which we now note.  Let $${\mathcal DR(\NN)} := \{r \in {\mathcal R(\NN)} \mid r_n \geq r_{n+1} \textrm{ for all } n \in \NN\}.$$   Then \begin{itemize} \item $(1,{\mathcal R(\NN)})$-STP is the \textit{shrinking target property (STP)}~\cite{Fa},  \item $(1,{\mathcal DR(\NN)})$-STP is the \textit{monotone shrinking target property (MSTP)}~\cite{Fa}, and \item $(s, \DR(\NN))$-STP is the \textit{$s$-exponent monotone shrinking target property ($s$MSTP)}~\cite{Ts}. \end{itemize}  Note that the relationship among these properties is given by the tower of implications $$\textrm{STP} \Rightarrow s\textrm{MSTP} \Rightarrow t\textrm{MSTP}$$ for $1 \leq s < t$~\cite{Ts} and further that, perhaps surprisingly, these properties are independent of mixing:  it is possible for a dynamical system $(M, \mu,\NN, T)$ to be mixing without having STP, MSTP~\cite{Fa}, or $s$MSTP for any $s\geq1$~\cite{GP}.  These properties are likely possessed by systems with hyperbolic behavior, and are possesed by, for example, any Anosov diffeomorphism with a smooth invariant measure (D. Dolgopyat,~\cite{Do}) or any linear expanding circle map (W. Philipp,~\cite{Ph}).  

Hyperbolic behavior, however, is not necessary for a system to have one of these properties.  For example, it has been shown in~\cite{Fa} that, while toral translations--which are elliptic systems and thus have no hyperbolic behavior--do not have STP (a result which also follows from Theorem~\ref{thmisometrySTP}), those translations whose translation vectors are in a certain set of full Hausdorff dimension do have MSTP~\cite{Ku} (see also~\cite{Fa}).  In dimension one, even more is known:  there are full-measure sets of rotation angles that correspond to circle rotations with $s$MSTP for all $s>1$~\cite{Ts}--this result is, moreover, sharp, and thus the $s$ can be thought of as a dynamical parameter for the family of circle rotations.   

\subsection{Statement of results}\label{subsecSoR}

Let us denote the torus by $\TT^n := \RR^n / \ZZ^n$ and the probability Haar measure on it by $\mu$.  In most of the cases of shrinking target properties considered in the aforementioned literature, the action is by the additive semigroup $\NN$ (with the natural ordering).  As a first step in generalizing to other actions, let us consider the semigroup $\NN^n$ with the following action:  for $k \in \IN^n$, define\footnote{It is easy to see that the set $\IN^n$ with binary operation $\circ:  \IN^n \times \IN^n \rightarrow \IN^n, k \circ l \mapsto (k_1 l_1, \cdots, k_n l_n)$ is a semigroup with identity $(1, \cdots, 1)$ and that $T^k$ is a $\IN^n$-action under $\circ$ and is $\mu$-preserving.}  \[T^k: \IT^n \rightarrow \IT^n; \alpha \mapsto (\alpha_1 k_1, \cdots, \alpha_n k_n ).\]  In Section~\ref{secMECMII}, we show

\begin{theo}
Let $n \geq 1$, $\mu$ be the probability Haar measure on $\TT^n$, and $T^k$ be as above.  Then there is a radius sequence \[r: \IN^n \rightarrow \IR_{\geq0}\] such that, for all real numbers $C >0$, the dynamical system $(\IT^n, \mu, \NN^n, T)$ has (1,$\{Cr\})$-STP.
\label{thmASTP}
\end{theo}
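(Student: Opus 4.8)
The plan is to exhibit one explicit, slowly decaying radius sequence $r:\NN^n\to\RR_{\geq0}$ and to check the definition of $(1,\{Cr\})$-STP directly, for every $C>0$: that the balls $A_k:=B(x,Cr_k)$ satisfy the divergence condition~(\ref{div2}) with $s=1$, and that $\{A_k\}$ is BC for $T$. (A constant $r$ makes both clauses trivially true, but then the targets do not shrink, so a genuinely decaying choice is wanted.) The point that makes the argument go through is that $\mu$ and the action both split as products over the $n$ coordinate circles: with respect to the supremum metric on $\TT^n$ (any bi-Lipschitz-equivalent metric only changes constants) one has $B(x,\rho)=\prod_{i=1}^{n}\{u\in\TT:\|u-x_i\|<\rho\}$, and since $T^k$ acts coordinatewise,
\[
T^{-k}A_k\ =\ \bigl\{\alpha\in\TT^n:\ \|k_i\alpha_i-x_i\|<Cr_k,\ i=1,\dots,n\bigr\}.
\]
Hence it is enough to produce $r$ with (a) $\sum_{k\in\NN^n}r_k^n=\infty$, which (as $r_k\to0$) forces $\sum_k\mu(A_k)=\infty$ for every $C>0$ and every $x$, and (b) for every $C>0$ and every $x\in\TT^n$, for $\mu$-a.e.\ $\alpha$ there are infinitely many $k\in\NN^n$ with $\|k_i\alpha_i-x_i\|<Cr_k$ for all $i$.

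I would take $r_k:=\bigl(2+\log(1+\max_i k_i)\bigr)^2/\max_i k_i$, which tends to $0$, so the targets really shrink. For~(a), grouping the sum by the value $M:=\max_i k_i$, which is attained by $\asymp M^{n-1}$ tuples, gives $\sum_k r_k^n\asymp\sum_M M^{n-1}(\log M)^{2n}/M^{n}=\sum_M(\log M)^{2n}/M=\infty$; so the divergence condition holds for $A_k=B(x,Cr_k)$ whatever $C>0$ is, and the property we are after is not vacuous.

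The heart of the proof is~(b). Fix $C$ and $x$ and set $\rho_N:=C\bigl(2+\log(1+N)\bigr)^2/N$, so that $Cr_k=\rho_{\max_i k_i}$, $\rho$ is decreasing on $[N_1,\infty)$ for some $N_1$, and $\rho_N N\asymp(\log N)^2$. I would use the classical metric estimate that for Lebesgue-a.e.\ $\beta\in\TT$ the sequence $(j\beta)_{j\ge1}$ has discrepancy satisfying $ND_N(\beta)=O\bigl((\log N)^{1+\varepsilon}\bigr)$ for every $\varepsilon>0$, in particular $ND_N(\beta)=o\bigl(\rho_N N\bigr)$. Therefore, for a.e.\ $\beta\in\TT$ and all large $N$,
\[
\#\bigl\{\,j\in(\tfrac N2,N]:\ \|j\beta-x_i\|<\rho_N\,\bigr\}\ =\ \rho_N N\,\bigl(1+o(1)\bigr)\ \longrightarrow\ \infty,
\]
so that set is nonempty once $N$ is large. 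By Fubini there is, for a.e.\ $\alpha\in\TT^n$, a threshold $N_0(\alpha)$ such that for every dyadic $N=2^m\ge N_0$ and every $i$ one may choose $k_i\in(N/2,N]$ with $\|k_i\alpha_i-x_i\|<\rho_N$; the resulting $k=(k_1,\dots,k_n)$ has $\max_i k_i\in(N/2,N]$, whence $Cr_k=\rho_{\max_i k_i}\ge\rho_N>\|k_i\alpha_i-x_i\|$ for all $i$. Distinct $m$ give $k$'s lying in disjoint dyadic annuli, so infinitely many admissible $k$ occur, which is precisely~(b). Consequently $\mu(\limsup_k T^{-k}A_k)=\mu(\TT^n)$, and as $\{Cr\}$ is a singleton admissible set, $(\TT^n,\mu,\NN^n,T)$ has $(1,\{Cr\})$-STP for every $C>0$.

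The step needing genuine care is the choice of $r$: divergence of $\sum_k r_k^n$ (so that the property is not vacuous) must coexist with the coordinatewise requirement that the effective target length $\rho_N N$ outgrow the metric discrepancy of $(j\beta)_{j\le N}$; the logarithmic factors in $r_k$ are precisely what secures the latter with room to spare, and everything else is bookkeeping. Were one to insist instead on the borderline scale-invariant rate $r_k\asymp 1/\max_i k_i$ with no logarithmic correction, the soft equidistribution count above would no longer suffice: one would need, coordinate by coordinate, an inhomogeneous Khintchine-type second-moment argument (with the usual $\gcd(j,j')$ bounds for $\lambda\bigl(\{\|j\beta-x_i\|<\rho\}\cap\{\|j'\beta-x_i\|<\rho'\}\bigr)$) followed by a Kochen--Stone passage from positive to full measure---which is where the real difficulty would lie, but which is not needed for the theorem as stated.
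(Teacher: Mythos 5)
Your argument is correct, but it takes a genuinely different route from the paper's. The paper proves the theorem by applying Schmidt's effective Khintchine--Groshev theorem (Theorem~\ref{thmKGS}) to a radius sequence that is supported only on a polynomial curve $\{(P_1(q),\dots,P_n(q)):q\ge N_0\}\subset\NN^n$, is monotone along that curve, and is set to zero everywhere else; either the resulting sum is finite (and the STP is vacuous) or it diverges and Schmidt's counting asymptotic directly yields $\limsup T^{-k}A_k$ of full measure. You instead exhibit a concrete radius sequence $r_k\asymp(\log\max_i k_i)^2/\max_i k_i$ that is nonzero for \emph{every} $k\in\NN^n$, check divergence by grouping on $M=\max_i k_i$, and derive the infinitely-many-hits property coordinatewise from the classical metric discrepancy estimate ($ND_N(\beta)=O((\log N)^{1+\varepsilon})$ for a.e.\ $\beta$) combined with a dyadic counting argument and the monotonicity of $\rho_N$. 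Both approaches are valid witnesses for the existence statement. What your approach buys: an explicit, everywhere-shrinking radius sequence, a manifestly non-vacuous divergence condition, and a proof that invokes only standard discrepancy theory rather than Schmidt's polynomial Khintchine--Groshev machinery. What the paper's approach buys: it handles an arbitrary monotone radius choice along the polynomial curve---including the borderline rate $R_q\asymp1/q$ where, as you correctly observe, discrepancy alone gives no information and one would need a Kochen--Stone/second-moment argument---and, beyond hitting infinitely often, it inherits Schmidt's much sharper quantitative counting asymptotic with error $O(\Psi(h)^{1/2+\varepsilon})$. One small caution shared by both write-ups: the ball $B(x,\rho)$ must be interpreted in the supremum metric (or a product of coordinate balls) for the coordinatewise decomposition $\mu(B(x,\rho))=\prod_j\mu(B(x_j,\rho))$ to hold exactly; you flag this, and the paper implicitly makes the same choice.
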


Next consider the multiplicative semigroup $\NN$ (with the natural ordering), which we denote by $(\NN, *)$ to distinguish it from the additive semigroup.  The proof of Theorem~\ref{thmASTP} simplifies to show

\begin{theo}
Let $\mu$ be the probability Haar measure on $\TT^1$ and $(\NN, *)$ be as above.  Consider the expanding circle maps $$ T^k:\TT^1 \rightarrow \TT^1; \alpha \mapsto k \alpha $$ where $k \in \NN$.  Then the dynamical system $(\TT^1, \mu, (\NN, *), T)$ has MSTP.
\label{corECMMSTP} 
\end{theo}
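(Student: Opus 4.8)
The plan is to strip the definition of MSTP down to a metric statement about the orbits $\{n\alpha\}_{n\in\NN}$ and then run a second-moment (quasi-independence) argument; the scheme parallels the divergence half of Khintchine's metric approximation theorem in its inhomogeneous form, with no coprimality restriction. Since $T^n\colon\alpha\mapsto n\alpha$ on $\TT^1$, we have $T^{-n}B(x,r_n)=\{\alpha\in\TT^1:\|n\alpha-x\|<r_n\}$ (where $\|\cdot\|$ is the $\TT^1$-distance), so ``$\{B(x,r_n)\}$ is BC for $T$'' says exactly that for $\mu$-a.e.\ $\alpha$ the inequality $\|n\alpha-x\|<r_n$ has infinitely many solutions $n\in\NN$. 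Hence MSTP is the assertion that for every $x\in\TT^1$ and every non-increasing $r\colon\NN\to\RR_{\ge0}$ with $\sum_n\mu(B(x,r_n))=\infty$, the set $E(x,r):=\limsup_n\{\alpha:\|n\alpha-x\|<r_n\}$ is $\mu$-conull. First I would dispose of the degenerate cases: if $r_n\not\to0$ then $r_n\ge c>0$ eventually and equidistribution of $\{n\alpha\}_n$ for irrational $\alpha$ already gives $E(x,r)\supseteq\TT^1\setminus\QQ$; and any index with $r_n\ge1/2$ contributes the whole circle. So one may assume $r_n\downarrow0$ and $r_n<1/2$ for all $n$, whence $E_n:=\{\alpha:\|n\alpha-x\|<r_n\}$ has $\mu(E_n)=2r_n$ and $\sum_n\mu(E_n)=\infty$.

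The core is a quasi-independence estimate for the $E_n$. Each $E_n$ is a disjoint union of $n$ arcs of length $2r_n/n$ with equal gaps $1/n$; passing through the $\gcd(m,n)$-to-one factor map $\alpha\mapsto\gcd(m,n)\alpha$ turns $E_m\cap E_n$ into $\{\beta:\|m'\beta-x\|<r_m,\ \|n'\beta-x\|<r_n\}$ with $m'=m/\gcd(m,n)$ and $n'=n/\gcd(m,n)$ \emph{coprime}. Coprimality makes the arcs of the two sets equidistribute relative to one another, so a direct count of how many arcs of one meet arcs of the other yields $\mu(E_m\cap E_n)\le C\,\mu(E_m)\mu(E_n)$ with $C$ absolute, except for the ``pathological'' pairs---roughly those with $\max(m,n)\,r_{\max(m,n)}\ll\gcd(m,n)$---for which this bound genuinely fails (this is the regime where Khintchine's theorem really needs monotonicity). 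It is here that $r\downarrow$ is essential: one uses it to show the pathological pairs contribute $o\big(\big(\sum_{n\le N}\mu(E_n)\big)^2\big)$ to the double sum, for instance by the standard device of separating the indices with $r_n\ge1/n$ from those with $r_n<1/n$ (one of these sub-sums must still diverge) and treating the second family through its own monotone structure. Granting $\sum_{m,n\le N}\mu(E_m\cap E_n)\le C'\big(\sum_{n\le N}\mu(E_n)\big)^2$ for infinitely many $N$, the Chung--Erd\H{o}s inequality applied to the tails $\bigcup_{n\ge N}E_n$ gives $\mu(E(x,r))\ge1/C'>0$. To upgrade this to full measure, note that for fixed $m,n$ the set $E_m\cap E_n$ is spread at scale $\min(1/m,1/n)$ (each arc of $E_n$ meets at most one arc of $E_m$ in the relevant range), so the quasi-independence estimate localises: running the Chung--Erd\H{o}s bound inside an arbitrary fixed subarc $I$ shows $\mu(E(x,r)\cap I)\ge|I|/C''$, and the Lebesgue density theorem then forces $\mu(E(x,r))=1$. (Alternatively one may invoke the zero--one law for inhomogeneous $\limsup$ sets.)

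I expect the main obstacle to be precisely the pathological pairs: proving that when the $r_n$ decay faster than $1/n$ the ``bad'' overlaps $\mu(E_m\cap E_n)$---which individually can exceed $\mu(E_m)\mu(E_n)$---are globally negligible, using only monotonicity of $r$. This is the same phenomenon that forces the monotonicity hypothesis in Khintchine's theorem, and it is where the simplification of the argument of Theorem~\ref{thmASTP} does its real work; the factor-map reduction, the arc-counting in the non-pathological range, the second-moment lemma, and the localisation/density passage from positive to full measure are otherwise routine.
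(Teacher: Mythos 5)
Your route is genuinely different from the paper's: where the paper simply invokes Schmidt's quantitative Khintchine--Groshev theorem (Theorem~\ref{thmKGS}, with $P_1(q)=q$ and nested intervals $I_{1q}=B(x,r_q)$, so that $\Psi(h)\to\infty$ forces $N(h;\alpha)\to\infty$ a.e.), you attempt to rebuild the needed inhomogeneous monotone Khintchine statement from first principles via pairwise quasi-independence and Chung--Erd\H{o}s. The translation of MSTP to ``$\|n\alpha-x\|<r_n$ infinitely often a.e.,'' the dismissal of $r_n\not\to0$ via equidistribution, the reduction to coprime frequencies through the factor map $\alpha\mapsto\gcd(m,n)\alpha$, the arc-counting for the generic pairs, and the localization-plus-Lebesgue-density upgrade from positive to full measure are all correct standard moves (this is essentially Sz\"usz's proof scheme for the inhomogeneous monotone Khintchine theorem). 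So the strategy is viable and, unlike the paper's one-line citation, would give a self-contained argument.

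However, there is a genuine gap precisely where you flag trouble: the ``pathological pairs'' for which $\mu(E_m\cap E_n)\le C\,\mu(E_m)\mu(E_n)$ fails. You correctly observe that these are the pairs where the overlap is resonant (roughly $\max(m,n)\,r_{\max(m,n)}\ll\gcd(m,n)$, though even this characterization would need to be verified in the inhomogeneous setting, where the relative position of the two families of arc centers depends on $x$), and you correctly note that this is exactly where monotonicity of $r$ must enter. But the proposal does not actually prove that the bad pairs contribute $o\bigl((\sum_{n\le N}\mu(E_n))^2\bigr)$: ``separating the indices with $r_n\ge1/n$ from those with $r_n<1/n$ and treating the second family through its own monotone structure'' names a device without carrying out the estimate, and in the regime $r_n<1/n$ the $\gcd$ resonances are precisely what dominate and must be summed. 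This is the entire technical content of the result---it is what Schmidt's theorem packages away and what the Duffin--Schaeffer problem shows is delicate without monotonicity---so as written the argument is a correct skeleton with the load-bearing step left unproved.
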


Let us now consider $\NN$ as the usual additive semigroup with the natural ordering.  Recall that toral translations do not have STP; our next result (proved in Section~\ref{secCH}) generalizes this in dimension one to circle homeomorphisms:

\begin{theo}
Let $\nu$ be a non-atomic Borel probability measure on $\TT^1$.  If \[f:\TT^1 \rightarrow \TT^1\] is a $\nu$-preserving homeomorphism, then the dynamical system $(\TT^1, \nu, \NN, f)$ does not have STP.
\label{thmCircleSTP}
\end{theo}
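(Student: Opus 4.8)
The plan is to refute STP directly: for the given $f$ I will produce a single point $x\in\TT^1$ and a single radius sequence $r\in\mathcal R(\NN)$ with $\sum_{n}\nu(B(x,r_n))=\infty$ for which $\{B(x,r_n)\}$ fails to be BC, i.e.\ $\nu\bigl(\limsup_n f^{-n}B(x,r_n)\bigr)<1=\nu(\TT^1)$; one such counterexample defeats $(1,\mathcal R(\NN))$-STP. The reason such a sequence must exist is a rigidity fact: a $\nu$-preserving circle homeomorphism is, after a coordinate change, an isometry. Concretely, if $d_\nu(a,b)$ denotes the $\nu$-measure of the shorter of the two arcs joining $a$ and $b$, then (because $f$ is a homeomorphism, hence carries arcs to arcs, and preserves $\nu$) one has $d_\nu(fa,fb)=d_\nu(a,b)$; so $f$ contracts no pair of points, and --- just as for a circle rotation --- one can arrange the balls $B(x,r_n)$ to be ``just missed'' by $\nu$-almost every orbit along an infinite set of times while their total measure still diverges. (This rigidity is the same phenomenon that forces isometries to fail STP, cf.\ Theorem~\ref{thmisometrySTP}.)

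The first step is to make the isometry explicit. Since $f$ preserves $\nu$ it preserves $K:=\mathrm{supp}(\nu)$ and permutes the connected components of $\TT^1\setminus K$; as $\nu$ is non-atomic, $K$ is perfect. Collapsing the closure of each component of $\TT^1\setminus K$ to a point yields a monotone continuous surjection of $\TT^1$ onto a circle, on which $f$ descends to a homeomorphism preserving the push-forward of $\nu$, a measure that is now non-atomic and of full support. Composing with the cumulative-distribution homeomorphism of that measure, I obtain a monotone continuous surjection $\pi\colon\TT^1\to\TT^1$ with $\pi_*\nu=\mathrm{Leb}$ and $\pi\circ f=g\circ\pi$, where $g$ is a Lebesgue-preserving homeomorphism of $\TT^1$. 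Such a $g$ carries arcs to arcs of equal length, hence is an isometry, so $g$ is a rotation $R_\theta$ or a reflection. I will use repeatedly that $\pi$ sends arcs to arcs, that $\nu(J)=\mathrm{Leb}(\pi(J))$ and $\nu(\pi^{-1}(J))=\mathrm{Leb}(J)$ for every arc $J$ (the collapsed gaps being $\nu$-null), and that $\pi\circ f^{-n}=g^{-n}\circ\pi$.

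Now fix $x\in K$. Because $g$ is an isometry of $\TT^1$, the set $\{g^{-n}\pi(x):n\ge1\}$ is either dense in $\TT^1$ (precisely when $g$ is an irrational rotation) or finite; in the dense case every small open arc is visited at an infinite set of times, and in the finite case some single orbit point is attained at an infinite set of times. Either way there are an infinite set $S\subseteq\NN$ and an open arc $W$ with $\mathrm{Leb}(W)<\tfrac12$ such that $g^{-n}\pi(x)\in W$ for all $n\in S$. Choose $\epsilon_0>0$ small enough that $\rho:=\nu(B(x,\epsilon_0))$ satisfies $\mathrm{Leb}(W)+2\rho<1$ (possible because $\nu$ has no atoms), and set $r_n=\epsilon_0$ for $n\in S$ and $r_n=0$ otherwise. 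Then $\sum_n\nu(B(x,r_n))=\sum_{n\in S}\rho=\infty$ since $S$ is infinite and $\rho>0$, so (\ref{div2}) holds with $s=1$. On the other hand, for $n\in S$ the arc $\pi\bigl(f^{-n}B(x,\epsilon_0)\bigr)=g^{-n}\pi\bigl(B(x,\epsilon_0)\bigr)$ has Lebesgue measure $\rho$ and contains $g^{-n}\pi(x)\in W$, hence lies inside the $\rho$-neighbourhood $W_\rho$ of $W$; thus $f^{-n}B(x,r_n)\subseteq\pi^{-1}(W_\rho)$ for $n\in S$ while $f^{-n}B(x,r_n)=\emptyset$ for $n\notin S$, so $\limsup_n f^{-n}B(x,r_n)\subseteq\pi^{-1}(W_\rho)$. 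Therefore $\nu\bigl(\limsup_n f^{-n}B(x,r_n)\bigr)\le\mathrm{Leb}(W_\rho)=\mathrm{Leb}(W)+2\rho<1$, so $\{B(x,r_n)\}$ is not BC for $f$, and $(\TT^1,\nu,\NN,f)$ has no STP.

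The step I expect to demand the most care is the coordinate change of the second paragraph: one must verify that collapsing the complementary arcs of $\mathrm{supp}(\nu)$ genuinely produces a circle on which $f$ descends to a homeomorphism (using that $\mathrm{supp}(\nu)$ is perfect and that $f$ permutes the gaps), that the induced push-forward measure is non-atomic of full support so that its distribution function is a homeomorphism, and that the two bookkeeping identities relating $\nu$, $\mathrm{Leb}$, and $\pi$ on arcs persist even though $\pi$ need not be injective. The orientation-reversing case requires no separate argument, since a reflection is an isometry with finite orbits and so falls under the ``finite orbit'' alternative above; and the final estimate needs nothing beyond the isometry property of $g$ and the measure-preserving property of $\pi$. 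Everything else is routine.
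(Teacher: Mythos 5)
Your proposal is correct, and it takes a genuinely different route from the paper. The paper never conjugates $f$ to anything; instead it works directly on $\TT^1$, first proving that homeomorphisms of $\TT^1$ carry balls to balls (Lemma~\ref{lmhomeoballstoballs}), then using Poincar\'e recurrence to obtain a subsequence $\{n_k\}$ along which $f^{-n_k}(x) \to x$, and finally analyzing three cases depending on whether $x \in supp(\nu)$ is approached in $supp(\nu)$ from both sides, is isolated from the right, or is isolated from the left (Sublemmas~\ref{slmxnotleftisolated} and~\ref{slmxrightisolated}). In each case the preimage balls are shown to shrink $\nu$-small, so the limsup set has measure $0$; a radius sequence with divergent measure-sum is then manufactured via Lemma~\ref{lmposmeas}. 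You instead build a monotone semi-conjugacy $\pi \colon \TT^1 \to \TT^1$ (collapse the gap closures of $supp(\nu)$ --- disjoint since the support is perfect --- and compose with the distribution function of the pushed-forward measure), obtaining $\pi \circ f = g \circ \pi$ with $g$ a Lebesgue-preserving homeomorphism of the circle, hence a rotation or reflection; then you exploit the trivial recurrence of the isometry $g$ and the two transfer identities $\nu(J)=\mathrm{Leb}(\pi(J))$ and $\nu(\pi^{-1}(J))=\mathrm{Leb}(J)$ to confine the limsup set inside a fixed arc $\pi^{-1}(W_\rho)$ of measure strictly less than $1$. The two proofs buy different things: the paper's argument is entirely elementary and self-contained (it also specializes cleanly to the isometry Theorem~\ref{thmisometrySTP} by dropping the case analysis), while yours makes the underlying rigidity explicit --- a $\nu$-preserving circle homeomorphism is essentially an isometry after the coordinate change --- which is conceptually illuminating and shortens the final estimate, at the cost of invoking the monotone-quotient construction and checking that the collapsed measure is non-atomic with full support so its CDF is a homeomorphism. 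Both yield a limsup set of $\nu$-measure strictly less than $1$, which suffices; the paper in fact gets measure $0$. Your construction of the divergent radius sequence (constant $\epsilon_0$ on an infinite set $S$, zero elsewhere) is simpler than the paper's, which uses a decreasing sequence of radii tuned so that the ball measures are about $1/k$; both work because STP requires BC for every admissible sequence, and zero radii give empty balls affecting neither the sum nor the limsup.
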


\begin{remark}  It is easy to see that the theorem is false if we remove the constraint that $\nu$ be non-atomic.  For example, take a rational rotation, and let  $supp(\nu)$ be, for example, the periodic orbit of $0$.  The only way for a sequence of balls to have divergent sum of measures is for a subsequence to always contain a point of the orbit of $0$.  Since the orbit of $0$ is finite, some point in this orbit must occur infinitely many times in this sequence of balls.   This argument, of course, generalizes to any dynamical system with a periodic orbit.

\end{remark}
\noindent Note that Theorem~\ref{thmCircleSTP} is false in higher dimensions, as a counterexample can be found in~\cite{Do}.

Using the well-known theory of rotation numbers of orientation-preserving circle homeomorphisms (see Chapter 7 of~\cite{BS} for a reference) and Lemma~\ref{lmirrrotnumnonatomic}, it immediately follows from the theorem that
\begin{coro}
Let $\nu$ be a Borel probability measure on $\TT^1$ and \[f:\TT^1 \rightarrow \TT^1\] be a $\nu$-preserving, orientation-preserving homeomorphism with irrational rotational number.  Then the dynamical system $(\TT^1, \nu, \NN, f)$ does not have STP.
\label{thmirrationalhomeonowSTP}
\end{coro}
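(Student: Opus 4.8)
The plan is to reduce to Theorem~\ref{thmCircleSTP} by replacing $f$ with a topologically conjugate circle homeomorphism to which that theorem directly applies. First I would invoke the theory of rotation numbers: since $f$ is an orientation-preserving homeomorphism of $\TT^1$ with irrational rotation number $\rho$, Poincar\'e's classification theorem provides a continuous, monotone, degree-one map $h:\TT^1\to\TT^1$ (the semiconjugacy) with $h\circ f = R_\rho\circ h$, where $R_\rho$ is the rigid rotation by $\rho$. The key point I want to extract is that $f$ is \emph{semiconjugate} to an irrational rotation, and in particular the pushforward measure $\nu$ has a description that forces it to be non-atomic.

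The second step is to show $\nu$ is non-atomic, which is where Lemma~\ref{lmirrrotnumnonatomic} (cited in the excerpt) is presumably used: a $\nu$-preserving, orientation-preserving homeomorphism with irrational rotation number must preserve only non-atomic measures. The reason is that an atom would be part of an at most countable set of atoms of maximal mass, which $f$ permutes; since $f$ has no periodic orbits (irrational rotation number), a finite orbit of atoms of equal maximal mass is impossible, and the total mass of such atoms is finite, so there can be none. Thus $\nu$ is automatically non-atomic, and the hypothesis of Theorem~\ref{thmCircleSTP} is met.

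With $\nu$ non-atomic established, the corollary is immediate: Theorem~\ref{thmCircleSTP} says that $(\TT^1,\nu,\NN,f)$ does not have STP, which is exactly the conclusion sought. So the entire argument is: (i) irrational rotation number $\Rightarrow$ no periodic points; (ii) no periodic points $+$ $f$-invariance of $\nu$ $\Rightarrow$ $\nu$ non-atomic (Lemma~\ref{lmirrrotnumnonatomic}); (iii) apply Theorem~\ref{thmCircleSTP}.

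The main obstacle — really the only non-formal point — is step (ii), i.e.\ verifying that Lemma~\ref{lmirrrotnumnonatomic} delivers non-atomicity in the precise generality needed (a general Borel probability measure, not assumed ergodic or fully supported). One must be careful that the set of atoms of a fixed mass is finite and $f$-invariant, so that the absence of periodic orbits rules it out; this uses that $\nu(\TT^1)=1<\infty$ so only finitely many atoms can exceed any given mass. Everything else is a direct citation, so the proof is short.
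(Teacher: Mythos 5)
Your proposal is correct and follows essentially the same route as the paper: the paper also reduces to Theorem~\ref{thmCircleSTP} by verifying non-atomicity, and its Lemma~\ref{lmirrrotnumnonatomic} proves this using the same idea you give in step (ii) — an atom would generate a finite (hence periodic) orbit of equal-mass atoms, which irrational rotation number forbids (the paper phrases the contradiction via perfectness of the $\omega$-limit set, a cosmetic difference). The aside about the Poincar\'e semiconjugacy is not needed and is not used in the paper's argument.
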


\begin{remark}  Every orientation-reversing homeomorphism (and every orientation-preserving homeomorphism with rational rotation number) has a periodic point and hence, by the previous remark, can possibly have STP (with respect to an appropriate measure).  
\end{remark}

For irrational rotations,  the corollary reduces to the aforementioned result in~\cite{Fa} since the probability Haar measure on $\TT^1$ (restricted to the Borel $\sigma$-algebra) is the only invariant Borel probability measure.  However, the corollary (and the theorem) applies, for example, to the well-known Denjoy example (see Chapter 7 of~\cite{BS} for a reference).  

Finally, similar to Theorem~\ref{thmCircleSTP} is the following theorem for isometries of complete, separable metric spaces:

\begin{theo}
Let $M$ be a complete, separable metric space and $\nu$ be a non-atomic Borel probability measure on $M$.  If \[f:M \rightarrow M\] is a $\nu$-preserving isometry, then the dynamical system $(M, \nu, \NN, f)$ does not have STP.
\label{thmisometrySTP}
\end{theo}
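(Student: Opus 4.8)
The plan is to exhibit, for the system $(M,\nu,\NN,f)$, a single point $x\in M$ and a single radius sequence $r\in\mathcal R(\NN)$ for which the balls $A_n:=B(x,r_n)$ satisfy the divergence condition $(\ref{div2})$ with $s=1$ but fail to be a BC sequence, i.e.\ $\nu\bigl(\limsup_n f^{-n}A_n\bigr)<\nu(M)=1$; this contradicts $(1,\mathcal R(\NN))$-STP, which is STP. The mechanism is the rigidity of isometries: each iterate $f^{n}$ is distance-preserving, so $f^{-n}(B(x,\rho))\subseteq B\bigl(x,\rho+d(f^{n}x,x)\bigr)$, and therefore if all the ``mass'' of $r$ is placed at times $n$ where the orbit of $x$ returns very close to $x$, the set $\limsup_n f^{-n}A_n$ is squeezed inside $\{x\}$, a $\nu$-null set.

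First I would fix a point $x\in M$ which is both recurrent, meaning $\liminf_n d(f^{n}x,x)=0$, and an element of $\operatorname{supp}(\nu)$. Each requirement holds on a set of full $\nu$-measure: the second because $M$ is separable (the union of the null balls in a countable basis is null), and the first by Poincar\'e recurrence applied, for each $m$, to a countable cover of $M$ by balls of radius $1/m$ and then intersected over $m$. Since $x\in\operatorname{supp}(\nu)$ we have $\nu(B(x,\rho))>0$ for every $\rho>0$, and since $\nu$ is non-atomic we have $\nu(\{x\})=0$, whence $\nu(B(x,\rho))\downarrow 0$ as $\rho\downarrow 0$ by continuity from above.

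Next I would construct $r$. Pick any sequence $\rho_j\downarrow 0$, set $a_j:=\nu(B(x,\rho_j))>0$, and form a sequence $(s_k)_{k\geq 1}$ by listing $\rho_1$ exactly $\lceil 1/a_1\rceil$ times, then $\rho_2$ exactly $\lceil 1/a_2\rceil$ times, and so on; then $s_k\to 0$ while $\sum_k\nu(B(x,s_k))\geq\sum_j 1=\infty$. Using recurrence, choose integers $n_1<n_2<\cdots$ with $d(f^{n_k}x,x)<s_k$ for every $k$, which is possible because at each fixed scale there are infinitely many return times. Now define $r_n:=s_k$ if $n=n_k$ and $r_n:=0$ otherwise. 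Then $A_n=\emptyset$ for $n\notin\{n_k\}$, so $\sum_n\nu(A_n)=\sum_k\nu(B(x,s_k))=\infty$ and $(\ref{div2})$ holds. For the $\limsup$ estimate, note that for $y\in f^{-n_k}(B(x,s_k))$ distance-preservation of $f^{n_k}$ and the triangle inequality give $d(y,x)=d(f^{n_k}y,f^{n_k}x)\leq d(f^{n_k}y,x)+d(x,f^{n_k}x)<2s_k$, so $f^{-n_k}(B(x,s_k))\subseteq B(x,2s_k)$. Since $2s_k\to 0$, every $y\neq x$ lies in $B(x,2s_k)$ for only finitely many $k$, whence $\limsup_n f^{-n}A_n=\limsup_k f^{-n_k}(B(x,s_k))\subseteq\{x\}$, a $\nu$-null set. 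Therefore $\{A_n\}$ is not BC for $f$, and $(M,\nu,\NN,f)$ does not have STP.

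The one point that requires care is the tension in the construction of $r$: the radii must tend to $0$ (so that the balls $B(x,2s_k)$ collapse to $\{x\}$) and yet the measures of the targets must sum to infinity; the block construction above resolves this, and it is precisely here that both non-atomicity (which gives $\nu(B(x,\rho))\to 0$) and $x\in\operatorname{supp}(\nu)$ (which gives $\nu(B(x,\rho))>0$) are genuinely used. I would also stress that no surjectivity or invertibility of $f$ is needed: the identity $d(y,x)=d(f^{n_k}y,f^{n_k}x)$ uses only that $f^{n_k}$ preserves distances, and $f^{-n}$ is read throughout as a preimage. The recurrence input is the standard topological form of Poincar\'e recurrence on a separable finite measure space, and everything else is routine.
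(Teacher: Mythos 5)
Your proof is correct and follows essentially the same strategy as the paper's: pick a recurrent point $x$ in $\operatorname{supp}(\nu)$, build a radius sequence supported on the return times whose ball measures have divergent sum while the radii tend to zero, and use the rigidity of the isometry to show $\limsup_n f^{-n}A_n \subseteq \{x\}$, a $\nu$-null set. You differ from the paper only in small details: the paper works with $f^{-1}$-recurrence (so that $f^{-n_k}B(x,r_{n_k})$ is literally a ball of the same radius centered near $x$) and relies on its Lemma~\ref{lmposmeas} to produce the divergent radius sequence, whereas you use forward recurrence together with the triangle inequality to place $f^{-n_k}B(x,s_k)$ inside $B(x,2s_k)$, and you produce the divergent sequence by the equivalent block construction repeating $\rho_j$ roughly $1/\nu(B(x,\rho_j))$ times. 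Your route has two minor advantages you correctly flag: it reads $f^{-n}$ as a preimage, so no invertibility of $f$ is needed; and it replaces the paper's appeal to $\operatorname{supp}(\nu)\subset\mathcal R(f^{-1})$ (for which the paper invokes completeness) by the softer fact that the set of $f$-recurrent points and $\operatorname{supp}(\nu)$ are each of full measure on a separable space, so completeness is not used.
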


\noindent Completeness and separability are used only to find a point of $supp(\nu)$ which is also a recurrent point for $f^{-1}$; otherwise, they are not necessary.

We show in Section~\ref{secCH} that a simplification of the proof of Theorem~\ref{thmCircleSTP} yields the proof of Theorem~\ref{thmisometrySTP}.  Let us study systems without STP first.

\subsection*{Acknowledgements} I would like to thank Dmitry Kleinbock for helpful discussions.

\section{Circle homeomorphisms and isometries of metric spaces}\label{secCH}

In this section, we study systems that do not have STP:  we show Theorems~\ref{thmCircleSTP} and~\ref{thmisometrySTP} and Corollary~\ref{thmirrationalhomeonowSTP}.  

\subsection{Notation}


Let $x, y \in \TT^1$, and let $\pi:\IR \rightarrow \TT^1$ be the canonical projection map.  Let $\bar{x}$ denote an element of $\pi^{-1}(x)$.  Then what is meant by an open ball or an open interval of $\TT^1$ is clear.  For example, $B(x, 1/2) = \TT^1 \backslash \{(x + 1/2) \textrm{ mod } 1\}.$ Moreover, there is a total ordering $\leq$ on $B(x, 1/2)$ derived from the natural total ordering on $\IR$, and $\leq$ is defined as follows:  for any $x, y \in B(x, 1/2)$, $x \leq y$ if $\bar{x} \leq \bar{y}$ whenever $\bar{x}$ and $\bar{y}$ lie in the same connected component of $\pi^{-1}(B(x,1/2)) \subset \IR$.  For convenience, let us define $\geq, <,$ and $>$ in the usual way from $\leq$.

The left interval of $B(x, 1/2)$ is denoted as \[B_-(x,1/2) := \{y \in B(x,1/2) \mid y < x \};\]  the right interval, \[B_+(x,1/2) := \{y \in B(x,1/2) \mid y > x \}.\]  Thus, $B(x,1/2) = B_-(x,1/2) \amalg B_+(x,1/2) \amalg \{x\}$.  The measure $\mu$ is the probability Haar measure on $\TT^1$, and $d$ denotes distance on $\TT^1$.  

\subsection{The nature of $supp(\nu)$}

There are three types of points $x$ (defined below) in $supp(\nu)$.\footnote{The measure $\nu$ is from the statement of Theorem~\ref{thmCircleSTP}.}  First note

\begin{lemma}
Let $\nu$ be a non-atomic, finite Borel measure.  Then $supp(\nu)$ is a perfect set.\footnote{Recall that a closed set of a metric space is \textit{perfect} if every point of it is a limit point.}
\label{lmsuppperfect}
\end{lemma}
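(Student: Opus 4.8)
The plan is as follows. By definition $supp(\nu)$ is the complement of the union of all open sets of $\nu$-measure zero, so it is closed; hence it suffices to show it contains no isolated point, i.e.\ that every $x\in supp(\nu)$ is a limit point of $supp(\nu)$. I would argue this by contradiction: an isolated point of $supp(\nu)$ would turn out to be an atom of $\nu$, contradicting the hypothesis.

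First I would record the standard fact that $\nu$ carries no mass off its support, namely $\nu\big(M\setminus supp(\nu)\big)=0$. Indeed $M\setminus supp(\nu)$ is, by definition, a union of open sets of measure zero; since the ambient space is separable (in the applications it is $\TT^1$ or a complete separable metric space), it is second countable and therefore Lindel\"of, so this cover admits a countable subcover by null open sets, and countable subadditivity gives $\nu\big(M\setminus supp(\nu)\big)=0$.

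Now suppose, for contradiction, that some $x\in supp(\nu)$ is isolated in $supp(\nu)$. Then there is an open set $U$ with $x\in U$ and $U\cap supp(\nu)=\{x\}$, whence $U\setminus\{x\}\subseteq M\setminus supp(\nu)$ and so $\nu(U\setminus\{x\})=0$ by the previous step. Consequently $\nu(U)=\nu(\{x\})$, and since $\nu$ is non-atomic we get $\nu(U)=0$. But $x\in supp(\nu)$ means every open neighborhood of $x$ has positive measure, so $\nu(U)>0$ --- a contradiction. Hence no point of $supp(\nu)$ is isolated, and $supp(\nu)$ is perfect.

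I expect the only genuinely delicate point to be the first step, the claim that $\nu$ assigns zero mass to the complement of its support: this can fail for finite Borel measures on non-separable metric spaces, so the argument relies on separability of the underlying space. This is harmless here, since every space considered in the paper is separable (and in the setting of Theorem~\ref{thmCircleSTP} the space $\TT^1$ is as concrete as possible). The remaining steps are just an unwinding of the definitions of ``support,'' ``non-atomic,'' and ``perfect.''
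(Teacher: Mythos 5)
Your proof is correct and takes essentially the same approach as the paper: identify an isolated point $x$ of the support, note $\nu(U)=\nu(\{x\})$ for a small neighborhood $U$, and derive a contradiction with non-atomicity. The only difference is that you explicitly justify $\nu(U\setminus\{x\})=0$ (via the Lindel\"of argument for $\nu(M\setminus supp(\nu))=0$), a step the paper leaves tacit.
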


\begin{proof}
The support is closed.  Let $x \in supp(\nu)$ be an isolated point.  Then there exists an open neighborhood of $x$, $U \subset \TT^1$, such that $U \cap supp(\nu) = \{x\}$.  Thus $\nu(\{x\}) = \nu(U)> 0$, a contradiction of non-atomic. \end{proof}

Define ${\mathcal S}(x)$ to be the set of all sequences in $supp(\nu) \backslash \{x\}$ converging to $x$.  Let $s \in {\mathcal S}(x)$.  Denote the $n$-th element of the sequence $s$ by $s_n$.  \begin{itemize} \item If, for all $s \in {\mathcal S}(x)$, there exists an $N(s) \in \IN$ such that, for all $n \geq N(s)$, $s_n \in B_-(x,1/2)$, then $x$ is \textit{isolated from the right}.  \item If, for all $s \in {\mathcal S}(x)$, there exists an $N(s) \in \IN$ such that, for all $n \geq N(s)$, $s_n \in B_+(x,1/2)$, then $x$ is \textit{isolated from the left}.

\item Otherwise, there exists a sequence $s \in {\mathcal S}(x)$ such that, for all $N \in \IN$, there exist $n, m \geq N$ such that $s_n \in B_+(x,1/2)$ and $s_m \in B_-(x,1/2)$; then $x$ is \textit{approached from both sides}. \end{itemize}

Now we can show

\begin{lemma}
Let $\nu$ be a non-zero, non-atomic, finite Borel measure on $\TT^1$.  Then \begin{enumerate}
 
\item  for every point $x \in supp(\nu)$ isolated from the left,  there exists a unique $y \in supp(\nu) \backslash \{x\}$ such that $\nu((y, x)) = 0$ or
\item  for every point $x \in supp(\nu)$ isolated from the right,  there exists a unique $y \in supp(\nu) \backslash \{x\}$ such that $\nu((x, y)) = 0$. \end{enumerate}


\label{lmmaxnullrad}
\end{lemma}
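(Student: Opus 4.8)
Both statements are mirror images of one another: the reflection $\alpha\mapsto-\alpha$ of $\TT^1$ preserves the class of non-zero, non-atomic, finite Borel measures and interchanges ``isolated from the left'' with ``isolated from the right'' and an arc $(y,x)$ with $(x,y)$, so the plan is to prove (1) in full; (2) follows verbatim, and in particular the disjunction in the statement holds. Fix $x\in supp(\nu)$ isolated from the left, lift it to $\bar x\in\RR$ with $\pi(\bar x)=x$, and for $t\in[0,1]$ let $I_t:=\pi\big((\bar x-t,\bar x)\big)$ be the open arc of length $t$ ending at $x$ on its left. Since $\pi$ is injective on $(\bar x-1,\bar x)$, these arcs are nested, $I_t\subset I_{t'}$ for $t<t'\leq1$, and $\bigcup_{0<t<1}I_t=\TT^1\setminus\{x\}$. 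Throughout I use the standard fact that $\TT^1\setminus supp(\nu)$ is open and $\nu$-null, so that any open set of positive measure meets $supp(\nu)$ while any open set of measure zero is disjoint from $supp(\nu)$.

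First I would produce one arc $I_{t_0}$, $t_0\in(0,1)$, of measure zero, arguing the contrapositive: if $\nu(I_{1/n})>0$ for all $n\geq2$, pick $s_n\in I_{1/n}\cap supp(\nu)$; then $(s_n)$ lies in $supp(\nu)\setminus\{x\}$, converges to $x$, and has $s_n\in B_-(x,1/2)$ for every $n$, so $(s_n)\in\mathcal S(x)$ is never eventually in $B_+(x,1/2)$, contradicting that $x$ is isolated from the left. Now set $L:=\sup\{t\in(0,1):\nu(I_t)=0\}$, so $L\geq t_0>0$, and by nesting $\nu(I_t)=0$ for every $t<L$; upward continuity of $\nu$ along $I_L=\bigcup_n I_{L(1-1/n)}$ gives $\nu(I_L)=0$. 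Moreover $L<1$: otherwise $\nu(I_t)=0$ for all $t\in(0,1)$ would force $\nu(\TT^1\setminus\{x\})=\lim_n\nu(I_{1-1/n})=0$, hence $\nu(\{x\})=\nu(\TT^1)>0$, contradicting that $\nu$ is non-zero and non-atomic. Put $y:=\pi(\bar x-L)$, so $y\neq x$ since $0<L<1$, and let $(y,x):=I_L$ (this is exactly the arc approaching $x$ from the left, in the orientation of the Notation subsection), so that $\nu\big((y,x)\big)=0$.

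Next I would check $y\in supp(\nu)$. If not, some open arc $\pi\big((\bar x-L-\varepsilon,\bar x-L+\varepsilon)\big)$ around $y$, with $0<\varepsilon$ and $L+\varepsilon<1$, is $\nu$-null; since $I_{L+\varepsilon}\subseteq\pi\big((\bar x-L-\varepsilon,\bar x-L+\varepsilon)\big)\cup I_L$, this would give $\nu(I_{L+\varepsilon})=0$, contradicting the definition of $L$. Finally, for uniqueness suppose $y'\in supp(\nu)\setminus\{x\}$ also has $\nu\big((y',x)\big)=0$; writing $t'\in(0,1)$ for the length of the arc $(y',x)=I_{t'}$, the definition of $L$ forces $t'\leq L$, and if $t'<L$ then $y'=\pi(\bar x-t')\in I_L$, an open $\nu$-null set, so $y'\notin supp(\nu)$ --- a contradiction. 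Hence $t'=L$ and $y'=y$, completing (1); (2) is the mirror statement.

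The argument is essentially a ``maximal null interval'' construction, so I expect no genuine obstacle; the two points needing a little care are fixing the orientation convention that makes the arc $(y,x)$ well defined (forced by $x$ being isolated from the left, so that the relevant gap lies to the left of $x$), and verifying that the endpoint $y$ of the maximal null arc sits on $supp(\nu)$ rather than strictly inside the gap. Everything else is continuity of measure together with the elementary properties of $supp(\nu)$.
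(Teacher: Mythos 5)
Your argument is correct and rests on essentially the same idea as the paper's: isolation from the left forces a $\nu$-null arc to the left of $x$, whose far endpoint $y$ lies on $supp(\nu)$, and uniqueness follows because any two such endpoints would place one inside the other's null arc, contradicting membership in $supp(\nu)$. The paper's existence step is far terser (a two-line proof by contradiction that does not explicitly maximize the gap), so your explicit maximal-null-interval construction simply spells out in full what the paper leaves largely implicit.
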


\begin{proof}
For the proof of (1), first note that $x$ cannot be the only point in $supp(\nu)$ since it would be an atom.  Assume not.  Then for every $y \in supp(\nu) \backslash \{x\}$, $\nu((y,x)) >0.$  Hence, $supp(\nu) \cap (y,x) \neq \phi$, a contradiction.  Hence, such a $y$ exists.  If it is not unique, let $z \in supp(\nu) \backslash \{x\}$ be another such point.  Then either $z \in (y, x)$ or $y \in (z,x)$, a contradiction.


The proof for (2) is analogous.  \end{proof}

For every point $x \in supp(\nu)$ isolated from the left, define \[s_x := \mu((y,x)),\] and for every point $x \in supp(\nu)$ isolated from the right, define \[s_x := \mu((x,y)).\]

The next lemma describes the nature of open intervals near a point of $supp(\nu)$ on the side that other points of $supp(\nu)$ are approaching.

\begin{lemma}
Let $\nu$ be a non-zero, non-atomic, finite Borel measure on $\TT^1$. \begin{enumerate} \item Let $x \in supp(\nu)$ and $a \in B_-(x, 1/2)$.  If $x$ is not isolated from the left, then there exists $\delta > 0$ (depending on $a$ and $x$) such that, if $b \in B_-(x, 1/2)$ and $d(b,x) < \delta$, then $supp(\nu) \cap (a,b) \neq \phi$. 
\item Let $x \in supp(\nu)$ and $a \in B_+(x, 1/2)$. If $x$ is not isolated from the right, then there exists $\delta > 0$ (depending on $a$ and $x$) such that, if $b \in B_+(x, 1/2)$ and $d(b,x) < \delta$, then $supp(\nu) \cap (b,a) \neq \phi$. 
\end{enumerate}
\label{lmcloseintervalshavemeasure}
\end{lemma}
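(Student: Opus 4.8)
The plan is to prove Lemma~\ref{lmcloseintervalshavemeasure} by a contradiction argument that extracts, from the failure of the conclusion, a sequence in $supp(\nu)$ converging to $x$ from one side only --- which would contradict the hypothesis that $x$ is not isolated from that side. I will only write out part~(1) in detail, since part~(2) is the mirror image obtained by swapping the roles of $B_-(x,1/2)$ and $B_+(x,1/2)$ (equivalently, by composing with an orientation-reversing reflection fixing $x$).

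\begin{proof}
We prove (1); the proof of (2) is analogous (interchange left and right).  Fix $x \in supp(\nu)$ that is not isolated from the left and fix $a \in B_-(x,1/2)$.  Suppose, for contradiction, that no such $\delta$ exists.  Then for every $\delta > 0$ there is a point $b = b(\delta) \in B_-(x,1/2)$ with $d(b,x) < \delta$ and $supp(\nu) \cap (a,b) = \phi$.  Applying this with $\delta = 1/k$ for each $k \in \NN$ produces points $b_k \in B_-(x,1/2)$ with $d(b_k, x) < 1/k$ and $supp(\nu) \cap (a, b_k) = \phi$.

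Since $x$ is not isolated from the left, by definition there is a sequence $s \in {\mathcal S}(x)$ and, for every $N \in \NN$, indices $n, m \geq N$ with $s_n \in B_+(x,1/2)$ and $s_m \in B_-(x,1/2)$; in particular $s$ has a subsequence $(s_{m_j})$ lying entirely in $B_-(x,1/2)$ and converging to $x$.  Each $s_{m_j} \in supp(\nu)$ and $s_{m_j} \in B_-(x,1/2)$, so $a < s_{m_j} < x$ for $j$ large enough that $d(s_{m_j}, x) < d(a,x)$ (using the total ordering $\leq$ on $B(x,1/2)$).  On the other hand, since $d(b_k, x) \to 0$, for each such $j$ we may choose $k$ large enough that $d(b_k, x) < d(s_{m_j}, x)$, which forces $s_{m_j} \in (a, b_k)$.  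This contradicts $supp(\nu) \cap (a, b_k) = \phi$.  Hence the required $\delta$ exists.
\end{proof}

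The only real content here is bookkeeping with the total ordering $\leq$ on $B(x,1/2)$ defined in the Notation subsection: one must be careful that ``$b \in B_-(x,1/2)$ with $d(b,x)$ small'' really does mean $a < b < x$ once $d(b,x) < d(a,x)$, and similarly that points of $supp(\nu)$ approaching $x$ from the left eventually lie in the interval $(a,b_k)$.  I expect this order-theoretic translation to be the main (and only) obstacle; there is no measure theory in the argument beyond the definition of $supp(\nu)$, and no use of the circle structure except the one-dimensional ordering, so the proof is essentially combinatorial once the definitions are unwound.
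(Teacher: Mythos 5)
Your proof is correct and takes essentially the same route as the paper: assume the conclusion fails, produce points $b_k \to x$ with $(a, b_k) \cap supp(\nu) = \phi$, and contradict this using a sequence in $supp(\nu) \cap B_-(x,1/2)$ converging to $x$, whose existence follows from ``not isolated from the left.'' (The paper packages the first half by taking a union and concluding $(a,x)\cap supp(\nu)=\phi$, but this is the same argument.) One small caution in the write-up: you quote, as the ``definition'' of ``not isolated from the left,'' the existence of $s \in {\mathcal S}(x)$ with $s_n \in B_+(x,1/2)$ and $s_m \in B_-(x,1/2)$ for indices $n,m \geq N$ for every $N$; that is the paper's condition for ``approached from both sides,'' which is strictly stronger (a point isolated from the right is also not isolated from the left, but then there need be no $s_n \in B_+(x,1/2)$ at all). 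Negating ``isolated from the left'' merely yields some $s \in {\mathcal S}(x)$ with $s_n \notin B_+(x,1/2)$ for infinitely many $n$; since $s_n \to x$ and $s_n \neq x$, those terms eventually lie in $B_-(x,1/2)$, which is precisely the subsequence you then use. So the slip in stating the definition does not affect the validity of your argument.
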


\begin{proof}
We prove (1); the proof of the other assertion is analogous.  Assume not.  Then for every $\delta > 0$, there exists $b \in B_-(x, 1/2)$ such that $d(b,x) < \delta$ and $supp(\nu) \cap (a,b) = \phi$.  Then for all intervals \[(a, x-1/n) \cap supp(\nu) = \phi.\]  Hence, \[\cup_{n \geq N} (a, x-1/n) \cap supp(\nu) = \phi\] for some large $N \in \IN$.  Thus, \[(a,x) \cap supp(\nu) = \phi.\]  Since $x$ is not isolated from the left, there exists an element $y \in B_-(x,1/2) \cap supp(\nu)$ such that $y \in B(x,d(x,a))$.  Hence, $y \in (a,x)$, a contradiction. \end{proof}




\subsection{Proof for circle homeomorphisms}\label{subsecPCSTPTheorem}

Let $S$ be an infinite subset of $\IN$ which inherits the usual total ordering on $\IN$ (denoted by $\leq$).  A \textit{shrinking radius sequence} is a function $r:S \rightarrow \IR_{\geq 0}$ such that, for all $\varepsilon >0$, there exists an $N \in S$ such that, for all $n \in S \textrm{ and } n \geq N$, we have $r(n) < \varepsilon$.  Denote $r_s := r(s)$, and denote the set of shrinking radius sequences by ${\mathcal SR}(S)$.

Also, we need some standard notation.  Let $X$ be a set and $f:X \rightarrow X$ be a discrete-time dynamical system.  Let $x \in X$.  The \textit{$\omega$-limit set} of $x$ is \[\omega(x) := \bigcap_{n \in \NN} \overline{\bigcup_{i \geq n} f^i(x)}.\]  The point $x$ is called \textit{(positively) recurrent} if $x \in \omega(x)$.  The set of such recurrent points is denoted ${\mathcal R}(f)$ (see Chapter 2 of~\cite{BS} for a reference).

One needs to constrain the map in some way in order to not have STP:

\begin{lemma}
Any homeomorphism of $\TT^1$ onto itself takes open balls to open balls.
\label{lmhomeoballstoballs}
\end{lemma}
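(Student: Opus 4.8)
The plan is to reduce the claim to a purely topological one. A homeomorphism $f$ of $\TT^1$ is a bijection that is both open and continuous, hence it sends $\TT^1$ onto $\TT^1$ and sends every proper, connected, open subset onto a proper, connected, open subset. So it suffices to show two things: that every open ball of $\TT^1$ is either $\TT^1$ itself or a proper connected open subset, and that, conversely, every proper connected open subset of $\TT^1$ is an open ball. The key elementary fact behind the second point is that the proper connected open subsets of $\TT^1$ are exactly the sets $\pi((a,b))$ with $a<b\le a+1$, each of which is a ball centered at the (image of the) midpoint of the arc.

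Concretely, I would fix an open ball $B(x,r)$. If $r>1/2$, then $B(x,r)=\TT^1$, since the standard metric $d$ on $\TT^1$ never exceeds $1/2$; hence $f(B(x,r))=\TT^1=B(y,r')$ for any $y$ and any $r'>1/2$. If $0<r\le1/2$, then $B(x,r)\ne\TT^1$ (the antipode $(x+1/2)\bmod1$ lies at distance $1/2\ge r$ from $x$), so $f(B(x,r))$ is open (because $f$ is open), connected (because $f$ is continuous), and a proper subset of $\TT^1$ (because $f$ is injective). Such a set is a proper open arc $\pi((a,b))$ with $0<b-a\le1$; if $b-a<1$ it equals $B\big(\pi((a+b)/2),\,(b-a)/2\big)$, and if $b-a=1$ it is $\TT^1$ minus one point, hence a ball of radius $1/2$ about the antipode of that point. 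Finally, $B(x,0)=\emptyset=B(y,0)$, which is trivial.

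The only step that is more than bookkeeping is the assertion that a proper, connected, open subset $U\subsetneq\TT^1$ has the form $\pi((a,b))$ with $b-a\le1$. I would obtain this by choosing a point $p$ in the nonempty closed set $\TT^1\setminus U$, using that $\TT^1\setminus\{p\}$ is homeomorphic to $\RR$, and invoking the classification of connected open subsets of $\RR$ as open intervals. I expect the main places to be careful are exactly this classification and the boundary radius $r\ge1/2$, where an open ball is not literally an arc and has to be treated on its own; everything else is immediate. I would also remark that the proof is entirely topological—it uses neither the measure $\nu$ nor any orientation of $\TT^1$—which is why it is stated for arbitrary homeomorphisms.
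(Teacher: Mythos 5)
Your proof is correct and follows essentially the same route as the paper: a homeomorphism preserves openness and connectedness, and open connected subsets of $\TT^1$ are balls. You simply spell out the classification of connected open subsets and the edge cases ($r\ge1/2$, $r=0$) that the paper leaves implicit.
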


\begin{proof}
Let $f$ be the homeomorphism and $B$ be an open ball; B is open and connected.  Then $f^{-1}(B)$ is open and connected, hence a ball.  \end{proof}

Using this lemma, we can show the key step used in the proof of Theorem~\ref{thmCircleSTP}:

\begin{proposition}
Let $\nu$ be a non-zero, non-atomic, finite Borel measure on $\TT^1$ and \[f:\TT^1 \rightarrow \TT^1\] be a $\nu$-preserving homeomorphism.  Let $x \in supp(\nu)$.  Then there exists a sequence $\{n_k\} \subset \IN$ such that, for all $r \in {\mathcal SR}(\{n_k\})$, \[\nu(\cap_{l=1}^\infty \cup_{k = l}^\infty f^{-n_k}B(x,r_{n_k})) = 0.\]
\label{lmsubseqzerolimsup}
\end{proposition}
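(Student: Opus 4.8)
The plan is to pick the sequence $\{n_k\}$ by compactness so that the backward orbit of $x$ converges along it, and then to show that the $\limsup$ set is squeezed into the limit point together with at most one $\nu$-null ``gap'' of $supp(\nu)$.

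First I would record that $supp(\nu)$ is a nonempty perfect set (Lemma~\ref{lmsuppperfect}) and is invariant under the homeomorphism $f^{-1}$. Since $\TT^1$ is compact, the sequence $(f^{-n}(x))_{n\in\NN}$ has a convergent subsequence; fix $\{n_k\}$ with $f^{-n_k}(x)\to q$, and note $q\in supp(\nu)$ since $supp(\nu)$ is closed. (This use of compactness is precisely the step that will be replaced by a recurrence argument for $f^{-1}$ in the proof of Theorem~\ref{thmisometrySTP}.) Now fix $r\in\mathcal{SR}(\{n_k\})$ and set $I_k:=f^{-n_k}B(x,r_{n_k})$, which by Lemma~\ref{lmhomeoballstoballs} is an open arc. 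Because $r\in\mathcal{SR}$, we have $r_{n_k}\to0$, whence $\nu(I_k)=\nu(B(x,r_{n_k}))\to\nu(\{x\})=0$ by non-atomicity and continuity of measure from above.

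Next, let $z$ lie in $\bigcap_{l}\bigcup_{k\ge l}I_k$, i.e. $z\in I_k$ for all $k$ in some infinite $S\subseteq\NN$. For $k\in S$ the point $f^{n_k}(z)$ lies in $B(x,r_{n_k})$, so the short arc $\sigma_k$ joining $f^{n_k}(z)$ to $x$ is contained in $B(x,r_{n_k})$; hence $J_k:=f^{-n_k}(\sigma_k)$ is an embedded arc joining $z$ to $w_k:=f^{-n_k}(x)$, contained in $I_k$, with $\nu(J_k)=\nu(\sigma_k)\le\nu(B(x,r_{n_k}))\to0$. Suppose $z\neq q$. Then $\TT^1\setminus\{z,q\}$ splits into two open arcs $A,A'$, and, being an embedded arc with endpoints $z$ and $w_k$, $J_k$ is exactly one of the two arcs bounded by $z$ and $w_k$. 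Since $w_k\to q$, for every $\delta>0$ and all large $k\in S$ the arc $J_k$ contains $A\setminus B(q,\delta)$ or $A'\setminus B(q,\delta)$; as $\nu(J_k)\to0$ this forces $\nu(A\setminus B(q,\delta))=0$ or $\nu(A'\setminus B(q,\delta))=0$. Letting $\delta\to0$ along a sequence and applying the pigeonhole principle, I conclude $\nu(A)=0$ or $\nu(A')=0$. So one of the two open arcs between $z$ and $q$ is disjoint from $supp(\nu)$, i.e. $z$ lies in the closure of a maximal open arc abutting $q$ and disjoint from $supp(\nu)$. Perfectness of $supp(\nu)$ prevents $q$ from being isolated on both sides, so by Lemma~\ref{lmmaxnullrad} there is at most one such arc $G$, and $z\in\{q\}\cup\overline{G}$. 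Since $G$ is open and disjoint from $supp(\nu)$, $\nu(\overline{G})=\nu(G)=0$, and $\nu(\{q\})=0$; hence $\bigcap_{l}\bigcup_{k\ge l}I_k\subseteq\{q\}\cup\overline{G}$ is $\nu$-null, which is the assertion.

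The one genuinely delicate point is the middle step: for $z\neq q$, ruling out that the arcs $J_k$, which a priori may wind the ``long way'' around the circle, carry non-negligible measure. This is handled by noting that once $w_k$ is within $\delta$ of $q$, whichever of the two arcs $J_k$ is must contain all of one of the fixed arcs $A,A'$ except for a piece inside $B(q,\delta)$, combined with the fact that $\nu(B(q,\delta))\to0$ as $\delta\to0$. The orientation-reversing case needs no separate treatment, since the argument uses only that each $f^{-n_k}$ is a $\nu$-preserving homeomorphism of $\TT^1$ carrying $x$ into $supp(\nu)$.
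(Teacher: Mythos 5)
Your proof is correct, and it follows a genuinely different route from the paper's. Two differences are notable. First, to choose the sequence $\{n_k\}$, you use sequential compactness of $\TT^1$ to extract a convergent subsequence of $(f^{-n}(x))$ with some limit $q \in supp(\nu)$; the paper instead invokes Poincar\'e recurrence (valid on a complete separable metric space with a finite $\nu$-preserving continuous map) to produce $f^{-n_k}(x) \to x$, so that the limit point is $x$ itself. Second, once $\nu(I_k) \to 0$ is established, the paper proceeds by a case analysis built on Sublemmas~\ref{slmxnotleftisolated} and~\ref{slmxrightisolated}, controlling the Haar lengths of the two halves $B^{\pm}_{n_k}$ separately according to whether $x$ is approached from both sides or isolated on one side, and then showing the limsup set is trapped between the appropriate endpoints. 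Your argument replaces this case split by a single geometric step: for any $z \neq q$ in the limsup set you produce closed arcs $J_k \subset I_k$ joining $z$ to $w_k = f^{-n_k}(x)$ with $\nu(J_k)\to0$, observe that each $J_k$ must essentially contain one of the two fixed arcs $A$, $A'$ between $z$ and $q$, and deduce by pigeonhole (letting $\delta\downarrow 0$) that one of $\nu(A)$, $\nu(A')$ vanishes. This localizes $z$ to the closure of a single null gap abutting $q$, or to $\{q\}$ itself, whose existence and uniqueness you then pull from Lemma~\ref{lmmaxnullrad} and perfectness. The upshot: your version is arguably the cleaner exposition of the underlying geometric mechanism for the circle, avoiding the three-way case split, whereas the paper's recurrence-based set-up has the structural advantage that it transfers verbatim to the non-compact setting of Theorem~\ref{thmisometrySTP} (which you acknowledge by flagging that the compactness step would need to be replaced there). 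One small point worth making explicit, which the paper also glosses over: the radii $r_{n_k}$ are merely required to tend to $0$, not to be monotone, so the step $\nu(B(x,r_{n_k}))\to 0$ is a limit argument using continuity from above along a nested family $B(x,\rho)$, $\rho\downarrow 0$, rather than a direct application to $\cap_k B(x,r_{n_k})$.
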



\subsubsection{Proof of Proposition~\ref{lmsubseqzerolimsup}}
Since $\TT^1$ is a complete, separable metric space, $\nu$ is a non-zero, finite Borel measure, and $f^{-1}$ is continuous and $\nu$-preserving, we have $supp(\nu) \subset {\mathcal R}(f^{-1})$.   Hence, there exists $\{n_k\}$ such that $f^{-n_k}(x) \rightarrow x$ as $k \rightarrow \infty$.  Fix this sequence $\{n_k\}$.


Let $r:\NN \rightarrow \RR_{\geq 0}$ be any radius sequence for the moment.  
By Lemma~\ref{lmhomeoballstoballs}, denote $f^{-n}B(x,r_n) = (a_n, b_n)$.  Also, define \[B^-_n = (a_n, b_n) \cap B_-(x, 1/2)\] and, likewise, \[B^+_n = (a_n, b_n) \cap B_+(x, 1/2).\]  Since $f$ is $\nu$-preserving, it follows that $\nu((a_n, b_n)) = \nu(B(x,r_n))$.

Now let  $r \in {\mathcal SR}(\{n_k\})$, which we consider for the remainder of the proof.  Thus, \begin{equation} \lim_{k \rightarrow \infty} \nu((a_{n_k}, b_{n_k})) = \lim_{k \rightarrow \infty} \nu(B(x,r_{n_k})) = \nu(\cap_{k=1}^\infty B(x, r_{n_k})) = \nu(\{x\}) = 0. \label{eqnvanmeas} \end{equation}

The following two (related) lemmas hold:

\begin{sublemma} If 
\begin{enumerate}
\item the point $x$ is not isolated from the left, then $\lim_{k \rightarrow \infty} \mu(B^-_{n_k})=0$.
\item the point $x$ is not isolated from the right, then $\lim_{k \rightarrow \infty} \mu(B^+_{n_k})=0$.
\end{enumerate}
\label{slmxnotleftisolated}
\end{sublemma}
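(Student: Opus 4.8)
The plan is to prove (1) by contradiction; (2) will follow by the symmetric argument with $B_+(x,1/2)$ in place of $B_-(x,1/2)$ and with ``not isolated from the left'' replaced by ``not isolated from the right.'' So suppose $\mu(B^-_{n_k})$ does not tend to $0$. Passing to a subsequence, which I continue to denote $\{n_k\}$, I may assume $\mu(B^-_{n_k}) \ge \varepsilon_0$ for some $\varepsilon_0 \in (0,1/2)$ and every $k$. Since $x \in B(x,r_{n_k})$, the point $p_k := f^{-n_k}(x)$ lies in $(a_{n_k},b_{n_k})$, and by the choice of $\{n_k\}$ we have $p_k \to x$.

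The key step I would aim for is to locate a single point $a \in B_-(x,1/2)$, $a \ne x$, independent of $k$, such that $(a,x) \subset (a_{n_k},b_{n_k})$ for all large $k$. Granting this, the proof finishes quickly: since $x$ is not isolated from the left, some sequence in $\mathcal{S}(x)$ has infinitely many terms in $B_-(x,1/2)$, and, these terms converging to $x$, they eventually land in the fixed interval $(a,x)$; hence $supp(\nu)\cap(a,x)\ne\emptyset$ (this is, in effect, Lemma~\ref{lmcloseintervalshavemeasure}(1)), so $\nu((a,x))>0$. Then for all large $k$ we get $\nu((a_{n_k},b_{n_k})) \ge \nu((a,x)) > 0$, contradicting $\lim_{k}\nu((a_{n_k},b_{n_k})) = 0$, which was established in~(\ref{eqnvanmeas}).

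Producing the interval $(a,x)$ is the real content, and I expect it to be the main obstacle. In the benign case $(a_{n_k},b_{n_k})$ is an arc that straddles $x$ without running far around $\TT^1$; then, since it is open and contains $p_k\to x$, its intersection with $B_-(x,1/2)$ is for large $k$ a single subinterval having $x$ as a limit point, and because this subinterval has $\mu$-measure at least $\varepsilon_0$ it must contain $(x-\varepsilon_0,x)$, so one may take $a := x-\varepsilon_0$. The awkward case is when $(a_{n_k},b_{n_k})$ is a long arc whose intersection with $B_-(x,1/2)$ is large only on a component lying far from $x$. Here I would exploit the remaining information: the complementary closed arc $\TT^1\setminus(a_{n_k},b_{n_k})$ has $\mu$-length at most $1-\varepsilon_0$, while by~(\ref{eqnvanmeas}) its $\nu$-measure tends to $\nu(\TT^1)$, so almost all of the mass of $\nu$ is forced into a short arc abutting $x$ on the side from which $supp(\nu)$ accumulates. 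One then argues either that this still forces $(a_{n_k},b_{n_k})$ to contain a fixed one-sided neighbourhood of $x$, or that this configuration is incompatible with $x$ failing to be isolated on the relevant side; in particular, when (as in the use of this Sublemma within the proof of Proposition~\ref{lmsubseqzerolimsup}) $x$ is approached from both sides, both one-sided ``isolated'' possibilities are excluded, the long-arc case cannot arise, and I would organize the bookkeeping of the general statement around this dichotomy.
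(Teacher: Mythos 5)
Your approach is the same as the paper's: argue by contradiction, locate a fixed interval of positive $\nu$--measure inside $(a_{n_k},b_{n_k})$ for infinitely many $k$, and contradict the fact that $\nu((a_{n_k},b_{n_k}))\to 0$ from~(\ref{eqnvanmeas}). Your ``benign case'' is exactly what the paper proves, by splitting on whether $b_m > x$ or $b_m \le x$ and showing $(x-r_0,x-\varepsilon)\subset(a_m,b_m)$ in both subcases. (One small slip in your wording: you claim the subinterval must contain $(x-\varepsilon_0,x)$ all the way up to $x$; when $b_m < x$ this fails, and one has to settle for $(x-\varepsilon_0,x-\eta)$ for a fixed small $\eta>0$, which is what the paper's extra $\varepsilon$ accomplishes. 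This is easily fixed.)

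The real issue is the ``awkward'' case you flag and then leave open. The case is genuinely consistent with the Sublemma's hypotheses: if $(a_m,b_m)^c\subset B_-(x,1/2)$ so that $B^-_m$ splits into two arcs, then $(a_m,b_m)\supset[x,x+1/2]$, hence $\nu([x,x+1/2])\le\nu((a_m,b_m))\to 0$, forcing $x$ to be isolated from the right --- which is compatible with $x$ not being isolated from the left. So the long-arc configuration cannot be dismissed on hypothesis grounds, and your proposal gives no argument for it. Moreover, your parenthetical claim that in the use of this Sublemma within Proposition~\ref{lmsubseqzerolimsup} ``$x$ is approached from both sides'' and therefore the long-arc case cannot arise is incorrect: part (1) of the Sublemma is invoked in Case 2 of that proof, precisely when $x$ is isolated from the right; that is the only situation in which the long-arc scenario for $B^-$ can occur, so it is exactly where you would need to handle it. For what it is worth, the paper's proof also proceeds on the tacit assumption that $a_m$ and $b_m$ both lie in $B(x,1/2)$ with $(a_m,b_m)$ traversed ``the short way'' (its inference ``$a_m<b_m-r_0$'' relies on $B^-_m$ being the single interval $(a_m,\min(b_m,x))$), so you have pinpointed a genuine subtlety; but identifying the gap is not the same as closing it, and as written your proposal is incomplete.
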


\begin{proof}
We prove (1); the proof of the other assertion is analogous.  Assume not.  Denote $\limsup_{k \rightarrow \infty} \mu(B^-_{n_k})$ by $4r_0$.  Then $4r_0 > 0$.  Since $\mu$ is bounded between $0$ and $1$, there exists a convergent subsequence $\m \subset \{n_k\}$ such that, for $m \in \m$, the subsequential limit $\lim_{m \rightarrow \infty} \mu(B^-_m) > 2r_0$.  For $a:=x - r_0$ and $x$ the given point in $supp(\nu)$, take $\delta$ given by Lemma~\ref{lmcloseintervalshavemeasure}.

Fix $\varepsilon >0$ such that $0 < \varepsilon < min(\delta/2, r_0/2)$.  Then there exists an $M \in \m$ such that, for all $m \in \m$ whenever $m \geq M$, we have $d(f^{-m}(x),x) < \varepsilon$ and $\mu(B^-_m) > r_0$.

If $b_m > x$, then $x \in (a_m, b_m)$.  Thus, $\mu(a_m,x) > r_0$, and hence $a_m < x - r_0$.  Therefore, \[(x-r_0, x-\varepsilon) \subset (a_m, b_m).\]

If $b_m \leq x$, then $a_m < b_m - r_0 \leq x - r_0$.  Because $f^{-m}(x) \in (a_m, b_m)$, we have $f^{-m}(x) < b_m$.  Since $d(f^{-m}(x),x) < \varepsilon$, it follows that $x - \varepsilon < f^{-m}(x) <b_m$. Since $0 < \varepsilon < min(\delta/2, r_0/2)$, we have $x - r_0 < x- \varepsilon$.  Thus, again, \[(x-r_0, x-\varepsilon) \subset (a_m, b_m).\]

Since $d(x - \varepsilon, x) = \varepsilon < \delta$, Lemma~\ref{lmcloseintervalshavemeasure} implies that $ \nu(a_m, b_m) \geq \nu((x-r_0, x-\varepsilon)) > 0$ for all $m \in \m$ whenever $m \geq M$.  Hence, for $m \in \m$, we have the subsequential limit \[\lim_{m \rightarrow \infty} \nu((a_m, b_m)) \geq \nu((x-r_0, x-\varepsilon)) > 0,\] contradicting (\ref{eqnvanmeas}).\end{proof}

\begin{sublemma}
If \begin{enumerate} \item the point $x$ is isolated from the right, then $\limsup_{k \rightarrow \infty} \mu(B^+_{n_k}) \leq s_x$.
\item the point $x$ is isolated from the left, then $\limsup_{k \rightarrow \infty} \mu(B^-_{n_k}) \leq s_x$.\end{enumerate}
\label{slmxrightisolated}
\end{sublemma}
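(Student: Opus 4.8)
The plan is to prove assertion (1); the proof of (2) is analogous (apply (1) after reflecting $\TT^1$). Fix $x\in supp(\nu)$ isolated from the right. If $s_x\geq 1/2$ there is nothing to prove, since $B^+_{n_k}\subseteq B_+(x,1/2)$ forces $\mu(B^+_{n_k})\leq 1/2\leq s_x$. Assume then $s_x<1/2$ and let $y$ be the point supplied by Lemma~\ref{lmmaxnullrad}(2); thus $(x,y)\subset B_+(x,1/2)$, $d(x,y)=s_x$, and $supp(\nu)\cap(x,y)=\phi$ (the last since $\nu((x,y))=0$). Because $supp(\nu)$ is perfect (Lemma~\ref{lmsuppperfect}) but misses $(x,y)$, it accumulates at $y$ only from the right, so $\nu((y,y+\varepsilon))>0$ for all sufficiently small $\varepsilon>0$; and because $x$ is isolated from the right, $supp(\nu)$ accumulates at $x$ from the left, so $c_0:=\nu(B_-(x,1/2))>0$.

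Next I would argue by contradiction. Suppose $\limsup_{k\rightarrow\infty}\mu(B^+_{n_k})>s_x$, and fix $\eta>0$ with $\eta<s_x$ and $s_x+4\eta<\min\{1/2,\ \limsup_{k\rightarrow\infty}\mu(B^+_{n_k})\}$; set $c:=\nu((y,y+2\eta))>0$. Choosing a subsequence $\m\subset\{n_k\}$ along which $\mu(B^+_m)$ tends to this limit superior, I obtain, for all large $m\in\m$: $\mu(B^+_m)>s_x+3\eta$; $d(f^{-m}(x),x)<\eta$ (since $f^{-m}(x)\rightarrow x$ along $\{n_k\}$); and, by (\ref{eqnvanmeas}), $\nu((a_m,b_m))<\min\{c,c_0\}$. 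It then suffices to show that the interval $(a_m,b_m)=f^{-m}B(x,r_m)$ contains $(y,y+2\eta)$, for that gives $\nu((a_m,b_m))\geq\nu((y,y+2\eta))=c$, a contradiction.

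To prove this containment, first note that $(a_m,b_m)\cap B_+(x,1/2)$ cannot be disconnected: if the trace of the arc $(a_m,b_m)$ on $B_+(x,1/2)$ had two components, then $(a_m,b_m)$ would have to contain all of $B_-(x,1/2)$ (together with $x$), whence $\nu((a_m,b_m))\geq c_0$, contrary to the choice of $m$. So $B^+_m$ is a single subinterval of $B_+(x,1/2)$, nonempty since $\mu(B^+_m)>0$. Using $f^{-m}(x)\in(a_m,b_m)$ and $d(f^{-m}(x),x)<\eta<s_x$ — distinguishing, just as in the proof of Sublemma~\ref{slmxnotleftisolated}, whether or not $f^{-m}(x)$ lies in $B_+(x,1/2)$ — one checks that the left endpoint of $B^+_m$ lies within distance $\eta$ of $x$, hence strictly between $x$ and $y$; and since $\mu(B^+_m)>s_x+3\eta$ while $d(x,y)=s_x$, the right endpoint of $B^+_m$ lies past $y+2\eta$. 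Hence $(y,y+2\eta)\subseteq B^+_m\subseteq(a_m,b_m)$, as required.

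The main obstacle is exactly this last geometric step: one must rule out the possibility that $f^{-m}B(x,r_m)$ wraps far enough around $\TT^1$ that its intersection with $B_+(x,1/2)$ falls into two pieces, and then track the two endpoints of $B^+_m$ with care. The wrap-around case is killed by the smallness of $\nu((a_m,b_m))$ — such a ball would have to swallow the fixed-mass set $B_-(x,1/2)$ — after which the endpoint bookkeeping is the same two-case analysis already carried out in the proof of Sublemma~\ref{slmxnotleftisolated}.
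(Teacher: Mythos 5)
Your proof is correct and takes essentially the same route as the paper's: pick the gap endpoint $y$ from Lemma~\ref{lmmaxnullrad}, extract a subsequence along which $\mu(B^+_m)$ exceeds $s_x$ by a definite margin while $f^{-m}(x)$ is close to $x$, use $(x,y)\cap supp(\nu)=\phi$ to force $f^{-m}(x)\leq x$ hence $a_m<x$, and conclude that $B^+_m$ reaches past $y$ into a set of positive $\nu$-mass, contradicting \eqref{eqnvanmeas}. Two small comments. First, you are more careful than the paper about ruling out the possibility that $(a_m,b_m)$ wraps around $\TT^1$: the paper simply writes $B^+_m\supset(x,y+s_1-s_x)$, tacitly assuming $B^+_m$ is a single interval starting at $x$, while you observe that wrap-around would force $(a_m,b_m)\supset B_-(x,1/2)$ and hence $\nu((a_m,b_m))\geq c_0>0$, incompatible with \eqref{eqnvanmeas}; this is a genuine (if minor) gap-filling step. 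Second, the "two-case analysis as in Sublemma~\ref{slmxnotleftisolated}" you invoke for the endpoint bookkeeping is actually unnecessary here: since $f^{-m}(x)\in supp(\nu)$, $(x,y)\cap supp(\nu)=\phi$, and $d(f^{-m}(x),x)<\eta<s_x$, only the case $f^{-m}(x)\leq x$ can occur, so $a_m<x$ and $B^+_m=(x,b_m)$ outright — which is exactly the observation the paper makes ("$f^{-m}(x)\in B_-(x,1/2)\amalg\{x\}$").
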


\begin{proof}
We prove (1); the proof of the other assertion is analogous.  Assume not.  Denote $\limsup_{k \rightarrow \infty} \mu(B^+_{n_k})$ by $s_0$.  Then $s_0 > s_x$.  Denote $s_1 = \frac{s_0 + s_x} 2$.  Since $\mu$ is bounded between $0$ and $1$, there exists a convergent subsequence $\m \subset \{n_k\}$ such that, for $m \in \m$, the subsequential limit $\lim_{m \rightarrow \infty} \mu(B^+_m) > s_1$.  Hence, there exists an $M \in \m$ such that, for all $m \in \m$ whenever $m \geq M$, we have $\mu(B^+_m) > s_1$ and $d(f^{-m}(x),x) < s_x$.

By Lemma~\ref{lmmaxnullrad}, set $y$ to be the unique point of $supp(\nu)$ such that $\mu((x,y)) = s_x$.  Since $x$ is isolated from the right, it follows that $f^{-m}(x) \in B_-(x, 1/2) \amalg \{x\}$.  Since $a_m < f^{-m}(x) \leq x$, we have $B^+_m \supset (x, y + s_1 - s_x)$, which is an open set containing a point of $supp(\nu)$, namely $y$.  Thus, for $m \in \m$, we have the subsequential limit \[\lim_{m \rightarrow \infty} \nu((a_m, b_m)) \geq \nu((x, y + s_1 - s_x)) > 0,\] contradicting (\ref{eqnvanmeas}).  \end{proof}



Let us now consider three different possibilities for approaching $x$. \medskip

\noindent \textit{Case 1:  The point $x$ is approached from both sides.} \medskip

Denote $\mu((a_{n_k}, b_{n_k}))$ by $l_{n_k}$.  Then, $\lim_{k \rightarrow \infty} l_{n_k}= 0$.  Choose any $\varepsilon > 0$.  Then there exists a $K \in \IN$ such that, for all $k \geq K$, we have $d(f^{-n_k}(x), x) < \varepsilon$ and $l_{n_k} < \varepsilon$.  Since $f^{-n_k}(x) \in (a_{n_k}, b_{n_k})$, it follows that $(a_{n_k}, b_{n_k}) \subset B(x, 2 \varepsilon)$ for all $k \geq K$.  Thus, \[\cap_{l=K}^\infty \cup_{k = l}^\infty f^{-n_k}B(x,r_{n_k}) \subset B(x, 2 \varepsilon).\]  But, $\varepsilon$ is arbitrary; hence, \[\cap_{l=K}^\infty \cup_{k = l}^\infty f^{-n_k}B(x,r_{n_k}) \subset \{x\},\] and the result follows. \medskip

\noindent \textit{Case 2:  The point $x$ is isolated from the right.} \medskip

Then $x$ is not isolated from the left.  Choose $\varepsilon \in (0, s_x)$.  Then there exists a $K \in \IN$ such that, for all $k \geq K$, we have $d(f^{-n_k}(x), x) < \varepsilon$, $\mu(B^-_{n_k}) < \varepsilon$, and $\mu(B^+_{n_k}) < s_x+ \varepsilon$.  Thus, $a_{n_k} < f^{-n_k}(x) \leq x$ (since $f^{-n_k}(x) \in supp(\nu)$).  Hence, \[(a_{n_k}, b_{n_k}) \subset (x - 2\varepsilon, x+s_x + 2\varepsilon) = (x - 2\varepsilon, y + 2\varepsilon)\] for all $k \geq K$.  Thus, \[\cap_{l=K}^\infty \cup_{k = l}^\infty f^{-n_k}B(x,r_{n_k}) \subset (x - 2\varepsilon, y + 2\varepsilon).\]  But, as $(x, y)$ is not in $supp(\nu)$, it follows that \[\cap_{l=K}^\infty \cup_{k = l}^\infty f^{-n_k}B(x,r_{n_k}) \cap supp(\nu) \subset B(x,2\varepsilon) \cup B(y, 2\varepsilon).\] But, $\varepsilon$ can be arbitrarily small; hence, \[\cap_{l=K}^\infty \cup_{k = l}^\infty f^{-n_k}B(x,r_{n_k}) \cap supp(\nu) \subset \{x\} \cup \{y\},\] and the result follows. \medskip

\noindent \textit{Case 3:  The point $x$ is isolated from the left.} \medskip

This case is proved in the analogous way to the previous case.  This completes the proof of Proposition~\ref{lmsubseqzerolimsup}. 

\subsubsection{Finishing the proof for circle homeomorphisms}

The next lemma is used to construct a sequence of open balls whose sum of measures diverges.

\begin{lemma}
Let $\nu$ be a non-zero, finite Borel measure on a metric space $M$. \begin{enumerate}

\item Then there exists an $N \in \IN$ such that, for all $n \geq N$, there exists an $r \in \IR_{\geq 0}$ such that we have $\nu(B(x, r)) \geq 1/n$.  

\item Let $t_n:= \inf\{r \geq 0 \mid \nu(B(x, r)) \geq 1/n\}$.  If $x \in supp(\nu)$, then $\lim_{n \rightarrow \infty} t_n = 0$. 
\end{enumerate}
\label{lmposmeas}
\end{lemma}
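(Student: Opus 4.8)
The plan is to handle the two parts separately. Part~(1) is a soft continuity-of-measure statement, and part~(2) is a one-line contradiction argument once the right reformulation of the infimum is in place.

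\emph{Part (1).} Since $d(x,y) < \infty$ for every $y \in M$, the balls $B(x,k)$, $k \in \NN$, increase to $M$, so by continuity of measure from below $\nu(B(x,k)) \to \nu(M)$. As $\nu$ is non-zero, $\nu(M) > 0$, and therefore $c := \nu(B(x,k_0)) > 0$ for some $k_0 \in \NN$. Taking any integer $N \geq 1/c$, we get $1/n \leq 1/N \leq c = \nu(B(x,k_0))$ for all $n \geq N$, so $r = k_0$ witnesses the claim. (Note this part does not use $x \in supp(\nu)$.)

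\emph{Part (2).} By part~(1), for every $n \geq N$ the set $\{r \geq 0 \mid \nu(B(x,r)) \geq 1/n\}$ is nonempty, so $t_n \in [0,\infty)$. The sequence $(t_n)_{n \geq N}$ is non-increasing, since as $n$ grows the threshold $1/n$ decreases and the defining set only enlarges; hence $t_n$ converges to some $t \geq 0$, and it remains to exclude $t > 0$. Suppose $t > 0$. Since $t_n \geq t$ for all $n \geq N$, the definition of the infimum gives that $\nu(B(x,r)) < 1/n$ whenever $0 \leq r < t$ and $n \geq N$. Taking $r = t/2$ and letting $n \to \infty$ forces $\nu(B(x,t/2)) = 0$. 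But $B(x,t/2)$ is an open neighbourhood of $x$, and $x \in supp(\nu)$ means every open neighbourhood of $x$ has positive $\nu$-measure; this contradiction shows $t = 0$, i.e. $t_n \to 0$.

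The argument is essentially routine: the only real content is the equivalence ``$t_n \geq t$ for all large $n$'' $\Longleftrightarrow$ ``$\nu(B(x,r)) = 0$ for all $r < t$'', which is precisely where $x \in supp(\nu)$ enters, so I do not expect a serious obstacle. The one piece of bookkeeping to keep in mind is that $t_n$ may equal $+\infty$ for $n < N$, but this is harmless since only the tail of the sequence matters for the limit.
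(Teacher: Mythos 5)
Your proof is correct and follows essentially the same route as the paper's: part (1) via continuity of measure from below on the increasing balls $B(x,k) \uparrow M$, and part (2) by observing monotonicity of $t_n$ and deriving a contradiction from $\nu(B(x,t_0/2)) = 0$ with $x \in \mathrm{supp}(\nu)$. Your justification of the monotonicity (nested defining sets) is a slight streamlining of the paper's argument, which instead compares $\nu$ at the midpoint $t_n + (t_{n+1}-t_n)/2$, but the content is the same.
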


\begin{proof}

For the proof of (1), choose $N$ such that $N^{-1} < \nu(M)$.  Since $M = \cup_{m=1}^\infty B(z, m)$ for any $z \in M$, continuity of $\nu$ from below implies that there exists an $m_0$ large enough so that $\nu(B(z, m_0)) > N^{-1}$. 

For the proof of (2), note first that $\{t_n\}$ is monotonically decreasing since if $t_n < t_{n+1}$, $\nu(B(x,t_n+\frac{t_{n+1} -t_n} {2})) < (n+1)^{-1}$, but also $\nu(B(x,t_n +\frac{t_{n+1} -t_n} {2}))\geq n^{-1}$, a contradiction.

Let $x \in supp(\nu)$.  Denote $\lim_{n \rightarrow \infty} t_n$ by $t_0$.  Assume $t_0 > 0$.  Then $\nu(B(x, t_0/2)) < 1/n$ for all $n \geq N$.  Thus, $\nu(B(x,t_0/2)) = 0$, a contradiction.  \end{proof}    

We can now show:

\begin{proof}[Proof of Theorem~\ref{thmCircleSTP}]  Let $x \in supp(\nu)$.  By Proposition~\ref{lmsubseqzerolimsup}, there exists a sequence $\{n_k\} \subset \IN$ such that, for all $s \in {\mathcal SR}(\{n_k\})$, we have $\nu(\cap_{l=1}^\infty \cup_{k = l}^\infty f^{-n_k}B(x,s_{n_k})) = 0$.  

Choose $r_{n_k} := 2 t_k$ from Lemma~\ref{lmposmeas}.  Thus, $r \in {\mathcal SR}(\{n_k\})$.  Hence, it follows that \[ \nu(\cap_{l=1}^\infty \cup_{k = l}^\infty f^{-n_k}B(x,r_{n_k})) = 0. \] Also, by construction, we have $\sum_{k=N}^\infty \nu(B(x,r_{n_k})) \geq \sum_{k=N}^\infty 1/k =  \infty$.  

For $n \in \IN \backslash \{n_k\}_{k=N}^\infty$, let $r_n = 0$.  Hence, $B(x,r_n) = \emptyset$, and the sum and the limsup set remain the same.  Thus, the system does not have STP.  \end{proof}

Finally, to show Corollary~\ref{thmirrationalhomeonowSTP} from the theorem, we need the following:

\begin{lemma}
Let $\nu$ be a Borel probability measure on $\TT^1$ and $f:\TT^1 \rightarrow \TT^1$ be a $\nu$-preserving, orientation-preserving homeomorphism.  If $f$ has irrational rotation number, then $\nu$ is non-atomic.
\label{lmirrrotnumnonatomic}
\end{lemma}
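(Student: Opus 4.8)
The plan is to argue by contradiction, using the one nontrivial input — that an orientation-preserving homeomorphism of $\TT^1$ with irrational rotation number has no periodic point (Chapter 7 of~\cite{BS}). Suppose $\nu$ is not non-atomic, so that there is a point $p \in \TT^1$ with $c := \nu(\{p\}) > 0$. First I would propagate this atom along the forward orbit of $p$: since $f$ is a homeomorphism (hence injective), $f^{-1}(\{f(p)\}) = \{p\}$, and $\nu$-invariance gives $\nu(\{f(p)\}) = \nu\big(f^{-1}(\{f(p)\})\big) = \nu(\{p\}) = c$; applying the same argument to $f^{n}(p)$ in place of $p$ yields, by induction, $\nu(\{f^{n}(p)\}) = c$ for every $n \in \NN$.

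Next I would invoke the rotation-number theory to conclude that the points $f^{n}(p)$, $n \in \NN$, are pairwise distinct. Indeed, if $f^{m}(p) = f^{n}(p)$ for some $m > n$, then applying the bijection $f^{-n}$ gives $f^{m-n}(p) = p$, so $p$ would be a periodic point of $f$; since $f$ is orientation-preserving with irrational rotation number, it has no periodic points, a contradiction. Hence the singletons $\{f^{n}(p)\}$ are mutually disjoint sets, each of $\nu$-measure $c$.

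Finally I would derive the contradiction with the finiteness of $\nu$: for every $N \in \NN$,
\[
1 \;=\; \nu(\TT^1) \;\geq\; \nu\!\left(\bigsqcup_{n=1}^{N} \{f^{n}(p)\}\right) \;=\; \sum_{n=1}^{N} \nu(\{f^{n}(p)\}) \;=\; Nc,
\]
which fails as soon as $N > 1/c$. Therefore no such atom $p$ can exist, and $\nu$ is non-atomic. I do not expect any real obstacle here: the only substantive ingredient is the classical no-periodic-points fact for irrational rotation number, and the rest is routine bookkeeping with the invariance of $\nu$ under the bijection $f$.
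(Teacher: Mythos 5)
Your proposal is correct and follows essentially the same route as the paper: in both, the atom is propagated along the forward orbit via $\nu$-invariance to produce infinitely many points of equal positive measure. The only difference is where the contradiction is located and which piece of rotation-number theory is invoked: you appeal directly to the classical \emph{no-periodic-points} fact to conclude all orbit points are distinct, then contradict finiteness of $\nu$; the paper instead uses finiteness of $\nu$ first to conclude the orbit is finite (hence $y$ is periodic and $\omega(y)$ finite), then contradicts Proposition~7.1.7 of~\cite{BS} which says $\omega(y)$ is perfect. Your version is, if anything, marginally more elementary since it only needs the absence of periodic points rather than perfectness of $\omega$-limit sets, but both are routine consequences of the same theory and the bookkeeping is identical.
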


\begin{proof}
Let $y \in supp(\nu)$.  If $\nu(\{y\}) > 0$, then, by $\nu$-preserving, every element in its orbit has the same positive measure.  Let ${\mathcal O}_f^+(y)$ denote the forward orbit of $y$ under $f$.  Since $\nu(\TT^1) < \infty$, $card({\mathcal O}_f^+(y)) < \infty$.  Hence, $y$ is a periodic point.  Thus, $\omega(y) = {\mathcal O}_T^+(y)$, contradicting Proposition~7.1.7 of~\cite{BS} which states that $\omega(y)$ is perfect. \end{proof}


\subsection{Proof for isometries}

Let us now note what simplifications of the proof of Theorem~\ref{thmCircleSTP} are necessary to prove Theorem~\ref{thmisometrySTP}.  

\begin{proof}[Proof of Theorem~\ref{thmisometrySTP}]
We need not consider the nature of $supp(\nu)$.  The two lemmas in the proof of Proposition~\ref{lmsubseqzerolimsup} can be replaced with  \[\lim_{k \rightarrow \infty} diam(f^{-n_k}(B(x,r_{n_k}))) = 0, \] which follows immediately because the map is an isometry.  The remainder of the proof of Proposition~\ref{lmsubseqzerolimsup} follows as in the case where $x$ is approached on both sides.  Now follow the rest of the proof of Theorem~\ref{thmCircleSTP}.
\end {proof}

\section{Simultaneous expanding circle maps}\label{secMECMII}


In this section, we study systems with a shrinking target property:  we show Theorems~\ref{thmASTP} and~\ref{corECMMSTP} (and Corollary~\ref{corASTP}).  To do so, we use the Khintchine--Groshev theorem according to Schmidt (Theorem 1 of ~\cite{Sch2}):

\begin{theo}
Consider the circle $\TT^1$ with the probability Haar measure $\mu$.  Let $n \geq 1$, and let $P_1(q), \cdots, P_n(q)$ be nonconstant polynomials with integral coefficients.  For each of the integers $j=1, \cdots, n$, let \[I_{j1} \supseteq I_{j2} \supseteq \cdots\] be a sequence of nested intervals in $\TT^1.$  Put $\psi(q) = \mu(I_{1q}) \cdots \mu(I_{nq})$ and $$\Psi(h) = \sum_{q=1}^h \psi(q).$$  Put $N(h; \alpha_1, \cdots, \alpha_n)$ for the number of integers $q, 1 \leq q \leq h,$ with $$\alpha_j P_j(q) \textrm{ mod } 1 \in I_{jq} \qquad (j=1, \cdots, n).$$ Let $\varepsilon > 0.$  Then $$ N(h; \alpha_1, \cdots, \alpha_n) = \Psi(h) + O(\Psi(h)^{1/2 + \varepsilon})$$ for almost every $n$-tuple of real numbers $\alpha_1, \cdots, \alpha_n.$
\label{thmKGS}
\end{theo}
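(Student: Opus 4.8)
The plan is to prove this by the classical second-moment (variance) method combined with a G\'al--Koksma type quasi-independence lemma, the standard machinery behind quantitative Borel--Cantelli statements for sequences of ``nearly independent'' events. So throughout, by periodicity it suffices to work on $\TT^n$.

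First I would set up the counting function: write $N(h;\alpha)=\sum_{q=1}^{h}\chi_q(\alpha)$, where $\chi_q$ is the indicator on $\TT^n$ of $\{\alpha:\alpha_j P_j(q)\bmod 1\in I_{jq}\text{ for all }j\}$. Since each $P_j$ is a nonconstant integer polynomial, $P_j(q)$ is a nonzero integer for all but finitely many $q$; for such $q$ the map $\beta\mapsto\beta P_j(q)\bmod 1$ preserves Haar measure on $\TT^1$, and the coordinates $\alpha_1,\dots,\alpha_n$ are independent, so Fubini gives $\int_{\TT^n}\chi_q\,d\alpha=\prod_j\mu(I_{jq})=\psi(q)$ for all large $q$. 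Hence the first moment of $N(h;\cdot)$ is $\Psi(h)+O(1)$, and the whole problem becomes one of controlling the fluctuations around this mean.

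The core step is the variance estimate $\int_{\TT^n}\bigl(N(h;\alpha)-\Psi(h)\bigr)^2\,d\alpha\ll_\varepsilon\Psi(h)^{1+\varepsilon}$ (it would suffice to obtain $\Psi(h)(\log\Psi(h))^A$). Expanding the square, the diagonal contributes $\sum_{q\le h}\psi(q)(1-\psi(q))\le\Psi(h)$, so the task reduces to bounding $\sum_{p\ne q}\bigl(\mathbb{E}[\chi_p\chi_q]-\psi(p)\psi(q)\bigr)$. By independence of the coordinates this factors over $j$ into one-dimensional correlations $\int_{\TT^1}\mathbf{1}_{I_{jp}}(\beta P_j(p))\,\mathbf{1}_{I_{jq}}(\beta P_j(q))\,d\beta$, and I would estimate each by expanding the interval indicators in Fourier series on $\TT^1$, using $|\widehat{\mathbf{1}_{I}}(m)|\le\min(|I|,(\pi|m|)^{-1})$. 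A frequency pair $(m,m')$ survives the $\beta$-integration only when $mP_j(p)+m'P_j(q)=0$; the zero pair reproduces $\psi(p)\psi(q)$, and the rest are governed by counting solutions of $mP_j(p)=-m'P_j(q)$ in bounded ranges. The nestedness $I_{j1}\supseteq I_{j2}\supseteq\cdots$ enters here: it makes $\psi$ nonincreasing and lets one truncate the Fourier series at a level adapted to $\min_j\mu(I_{jq})$ uniformly over the relevant tail.

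Given the variance bound, a G\'al--Koksma / Erd\H{o}s--G\'al lemma upgrades it to $N(h;\alpha)=\Psi(h)+O\bigl(\Psi(h)^{1/2}(\log\Psi(h))^{3/2+\varepsilon}\bigr)$ for a.e.\ $\alpha$: one runs the Chebyshev-plus-Borel--Cantelli argument along a subsequence $h_k$ with $\Psi(h_k)$ growing geometrically, then fills the gaps using the monotonicity of $h\mapsto N(h;\alpha)$ and of $h\mapsto\Psi(h)$; the logarithmic factors are then absorbed into $\Psi(h)^\varepsilon$ (and the assertion is vacuous when $\Psi$ stays bounded), which yields the stated $\Psi(h)+O(\Psi(h)^{1/2+\varepsilon})$. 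The main obstacle is the off-diagonal correlation estimate for \emph{general} nonconstant polynomials: the surviving frequency terms force one to count quadruples $(p,q,m,m')$ with $mP_j(p)=-m'P_j(q)$ and $|m|,|m'|$ in controlled ranges, which requires divisor- and $\gcd$-type bounds for values of polynomials. For $P_j(q)=q$ this is elementary; the polynomial case is exactly what makes Schmidt's theorem nontrivial, while the remainder is the routine second-moment-plus-Borel--Cantelli argument.
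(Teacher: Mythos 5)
The statement you are proving is not actually proved in this paper: the author explicitly states immediately before it that this is ``the Khintchine--Groshev theorem according to Schmidt (Theorem~1 of~[Sch2])'' and simply quotes it. So there is no internal proof to compare against; the paper supplies nothing beyond the citation to Schmidt's 1964 article.

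That said, your reconstruction is essentially the right shape, and it does track Schmidt's actual argument: a second-moment (variance) bound for the counting function, followed by a G\'al--Koksma type quasi-independence lemma and a Chebyshev/Borel--Cantelli step along a geometrically spaced subsequence $h_k$, with monotonicity of both $N(\cdot;\alpha)$ and $\Psi$ used to interpolate between the $h_k$. Your reduction of the off-diagonal term to counting integer solutions of $mP_j(p)=-m'P_j(q)$ after a Fourier expansion of the interval indicators is also the right reduction. But your write-up stops exactly where the real content of Schmidt's theorem begins. The whole difficulty is the uniform estimate
\[
\sum_{1\le p\ne q\le h}\Bigl(\mathbb{E}[\chi_p\chi_q]-\psi(p)\psi(q)\Bigr)\;\ll_\varepsilon\;\Psi(h)^{1+\varepsilon},
\]
and for nonlinear $P_j$ this rests on nontrivial arithmetic input: bounds on $\gcd(P_j(p),P_j(q))$ on average, divisor-sum estimates for polynomial values, and a careful truncation of the Fourier series whose cutoff must be tied to the lengths $\mu(I_{jq})$ in a way that exploits nestedness (and this is precisely where the hypothesis $I_{j1}\supseteq I_{j2}\supseteq\cdots$ is indispensable, not merely convenient). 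You flag this step as ``exactly what makes Schmidt's theorem nontrivial'' and then leave it undone, so as written the proposal is a correct roadmap rather than a proof. To turn it into a proof you would need to either reproduce Schmidt's correlation analysis or cite it, just as this paper does.
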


\begin{proof}[Proof of Theorem~\ref{thmASTP}]
Let $P_1, \cdots, P_n$ be nonconstant polynomials in one variable with integral coefficients such that \[P_j(z) \rightarrow + \infty \textrm{ as } z \rightarrow +\infty\] for all $j = 1, \cdots ,n$.  Then there exists an $N_0 \in \IN$ such that, for all $z \geq N_0$ and all $j = 1, \cdots, n$, the polynomial $P_j(z)$ is an injection (since these polynomials are asymptotically strictly increasing) and $\geq 1$. Let $x$ be any point in $\IT^n.$

Define a radius sequence $r: \IN^n \rightarrow \IR_{\geq 0}$ as follows:
\[r(P_1(N_0), \cdots, P_n(N_0)) \geq r(P_1(N_0+1), \cdots, P_n(N_0+1)) \geq \cdots\]
and, for all $k \in \IN^n \backslash \{(P_1(q), \cdots,  P_n(q)) \mid q \in \IN \textrm{ and  }q \geq N_0\},$ let \[r(k) = 0.\]  Then $\{Cr\}$ is an admissible set of radius sequences.  Let us also denote $R_q := r(P_1(q), \cdots,  P_n(q)).$ 

Note that \begin{eqnarray*} A:=\sum_{k \in \IN^n} \mu(B(x, C r_k)) = \sum_{q \in \IN} \mu(B(x, C R_q)) = \sum_{q \in \IN} \prod_{j=1}^n \mu(B(x_j, C R_q)).\label{sumA} \end{eqnarray*}  Since the measures of balls are nonnegative, the order of summation is irrelevant.  If $A < \infty,$ then the system has $(1,\{Cr\})$-STP.

Otherwise, $A = \infty$; and we have \[\sum_{q \in \IN} \prod_{j=1}^n \mu(B(x_j, C R_q)) = \infty.\]  Since $I_{jq} := B(x_j,C R_q)$ are, by construction, nested, Theorem~\ref{thmKGS} asserts that, for $j=1, \cdots, n$ simultaneously, \[\alpha_j P_j(q) \textrm{ mod } 1 \in B(x_j, C R_q)\] for infinitely many $q \in \IN$ and almost all $\alpha.$  Hence, \[T^{(P_1(q), \cdots, P_n(q))} (\alpha) \in B(x, C R_q)\] for infinitely many $q \in \IN$ and almost all $\alpha \in \IT^n$.

Consequently, \[\mu(\{\alpha \in \IT^n \mid T^k(\alpha) \in B(x,Cr_k) \textrm{ for infinitely many } k \in \IN^n\}) = \mu(\IT^n),\] and the system $(\IT^n, \mu, \NN^n, T)$ has $(1,\{Cr\})$-STP. \end{proof}

Now we adapt the previous proof to show Theorem~\ref{corECMMSTP}:

\begin{proof}[Proof of Theorem~\ref{corECMMSTP}]
Set $n=1$ in the proof of Theorem~\ref{thmASTP}.  Then $\circ$ coincides with the usual multiplication on $\IN$.  Let $P_1 = q \in \ZZ[q].$  Then any weakly decreasing sequence of radii is allowed.  Hence, the proof of Theorem~\ref{thmASTP}, in this case, shows MSTP.\end{proof} 

Finally, note that Theorem~\ref{thmASTP} can be slightly strengthened as follows.  Two radius sequences $r, s: \IN^n \rightarrow \IR_{\geq 0}$ are equivalent if there exists real numbers $C_1, C_2 > 0$ such that \[C_1 s(k) \leq r(k) \leq C_2 s(k) \textrm{ for all but finitely many } k \in \IN^n.\]  Let $[r]$ denote the equivalence class of $r$.  Then the following is easily shown:

\begin{corollary}
Let $n \geq 1$.  Let $r$ be a radius sequence that satisfies Theorem~\ref{thmASTP}.  Then $[r]$ is an admissible set of radius sequences, and $(\IT^n, \mu, \NN^n, T))$ has $(1,[r])$-STP.
\label{corASTP}
\end{corollary}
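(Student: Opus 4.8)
The plan is to show that Corollary~\ref{corASTP} follows almost immediately from Theorem~\ref{thmASTP} together with the observation that the $(1,\A(G))$-STP is insensitive to the choice of representative within an equivalence class. First I would verify that $[r]$ is an admissible set of radius sequences: since $r \in [r]$ (take $C_1 = C_2 = 1$), the class $[r]$ is a nonempty subset of $\mathcal{R}(\IN^n)$, which is exactly the definition of admissibility. In particular, every $s \in [r]$ is still supported (up to finitely many exceptions) on the set $\{(P_1(q),\dots,P_n(q)) \mid q \geq N_0\}$, because if $r(k) = 0$ for all but finitely many $k$ outside that set, then $C_1 s(k) \leq r(k) = 0$ forces $s(k) = 0$ there as well, again outside a finite set.

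Next I would take an arbitrary $s \in [r]$ and an arbitrary point $x \in \IT^n$ such that $A_k := B(x, s_k)$ satisfies the divergence condition $\sum_{k \in \IN^n} \mu(B(x,s_k)) = \infty$, and show $\{A_k\}$ is BC for $T$. The key point is the sandwich $C_1 s(k) \leq r(k) \leq C_2 s(k)$ for all but finitely many $k$. From the lower inequality $r(k) \geq C_1 s(k)$ we get $B(x, C_1 s_k) \subseteq B(x, r_k)$ for cofinitely many $k$, so $\sum_k \mu(B(x, C_1 s_k)) \leq \sum_k \mu(B(x, r_k)) + (\text{finite}) $; but wait --- I actually want to run the comparison so that divergence of the $s$-sum forces divergence of a $Cr$-sum to which Theorem~\ref{thmASTP} applies. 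Concretely: the inequality $s(k) \leq C_1^{-1} r(k)$ gives $B(x, s_k) \subseteq B(x, C_1^{-1} r_k)$ cofinitely, hence $\infty = \sum_k \mu(B(x,s_k)) \leq \sum_k \mu(B(x, C_1^{-1} r_k)) + (\text{finite terms})$, so with $C := C_1^{-1}$ the sequence $\{Cr\}$ satisfies the divergence hypothesis of Theorem~\ref{thmASTP} at the point $x$. Theorem~\ref{thmASTP} then gives $\mu(\limsup_k T^{-k} B(x, C r_k)) = \mu(\IT^n)$.

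Finally I would transfer this back to $s$ using the other inequality. From $C_1 s(k) \leq r(k)$, i.e. $C r_k = C_1^{-1} r_k \geq s_k$ cofinitely, we get $B(x, C r_k) \subseteq B(x, s_k) $ for all but finitely many $k$ --- wait, that is backwards; $C r_k \geq s_k$ gives $B(x, s_k) \subseteq B(x, C r_k)$, which is the wrong containment for a limsup transfer. The correct move is to use the \emph{upper} inequality $r(k) \leq C_2 s(k)$: choose instead $C := C_2^{-1}$ at the start? Let me restate cleanly. Using $r(k) \leq C_2 s(k)$ cofinitely, we have $B(x, C_2^{-1} r_k) \subseteq B(x, s_k)$ for all but finitely many $k$. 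So it suffices to apply Theorem~\ref{thmASTP} with $C = C_2^{-1}$ \emph{provided} the divergence hypothesis holds for $\{C_2^{-1} r\}$; and that follows from $\mu(B(x, s_k)) \leq \mu(B(x, C_1^{-1} r_k))$ cofinitely combined with the fact (noted in the proof of Theorem~\ref{thmASTP}) that the product structure $\mu(B(x, Cr_k)) = \prod_j \mu(B(x_j, C R_q))$ makes the divergence of the $C$-scaled sum independent of the positive constant $C$ --- indeed each factor $\mu(B(x_j, C R_q))$ is comparable to $\min(C R_q, 1/2)$ up to a bounded ratio, so the sums for different positive constants converge or diverge together. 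Granting that scale-invariance of divergence, $\{C_2^{-1} r\}$ also has divergent sum, Theorem~\ref{thmASTP} applies, $\mu(\limsup_k T^{-k} B(x, C_2^{-1} r_k)) = \mu(\IT^n)$, and since $B(x, C_2^{-1} r_k) \subseteq B(x, s_k)$ cofinitely (dropping finitely many indices changes neither sum nor limsup, as in the last paragraph of the proof of Theorem~\ref{thmCircleSTP}), we conclude $\mu(\limsup_k T^{-k} A_k) = \mu(\IT^n)$. The main obstacle --- really the only one requiring care --- is this bookkeeping with the two constants and the verification that divergence of the sum of measures is preserved under positive rescaling of the radii; once that is observed, the corollary is a formal consequence.
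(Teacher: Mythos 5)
Your argument is correct, and since the paper gives no proof (it only says the corollary ``is easily shown''), you are supplying the omitted details rather than departing from the author's route. The two essential observations you isolate are exactly the ones needed: (i) divergence of $\sum_k \mu(B(x,\rho_k))$ is invariant under multiplying the radii by a fixed positive constant, because on $\TT^n$ each factor $\mu(B(x_j,\rho))=\min(2\rho,1)$ satisfies $\min(2C'\rho,1)\geq (C'/C)\min(2C\rho,1)$ for $0<C'<C$, so the product sums are comparable up to a factor $(C'/C)^{\pm n}$; and (ii) the transfer must go through the \emph{lower} bound on $s$ (i.e.\ $r\leq C_2 s$, giving $B(x,C_2^{-1}r_k)\subseteq B(x,s_k)$ cofinitely) so that fullness of $\mu\bigl(\limsup_k T^{-k}B(x,C_2^{-1}r_k)\bigr)$, obtained from Theorem~\ref{thmASTP} with $C=C_2^{-1}$, passes to the larger sets $B(x,s_k)$. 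Your divergence hypothesis for $\{C_2^{-1}r\}$ is obtained by first comparing $s_k\leq C_1^{-1}r_k$ and then invoking scale-invariance, which is the correct order; there is no shortcut avoiding the scale-invariance step, as you note. The only cosmetic issue is that the write-up retains two false starts before landing on the right pairing of constants with directions of inclusion; in a final version you should present only the clean chain $s_k\leq C_1^{-1}r_k\Rightarrow\sum\mu(B(x,C_1^{-1}r_k))=\infty\Rightarrow\sum\mu(B(x,C_2^{-1}r_k))=\infty\Rightarrow\{B(x,C_2^{-1}r_k)\}$ is BC $\Rightarrow\{B(x,s_k)\}$ is BC, together with the one-line verification that $[r]$ is nonempty and hence admissible.
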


\section{Conclusion}

In this paper, we have classified a few systems according to their shrinking target property or lack of.  Our results have raised a few questions.  Can Theorem~\ref{corECMMSTP} be generalized to higher dimensional tori and/or for nonlinear expanding maps?  Can the conclusion of Theorem~\ref{corECMMSTP} be strengthened to STP?  Also, we know that Theorem~\ref{thmCircleSTP} is false for higher-dimensional manifolds~\cite{Do}.  However, if we impose a restriction like requiring the preimages of open balls to be open balls, which is always true for self-homeomorphisms of $\TT^1$ and which is essential to the proof of our theorem, can we conclude that, for $M$, a complete, separable metric space, and $\nu$, a non-atomic Borel probability measure on $M$, if $f:M \rightarrow M$ is a $\nu$-preserving homeomorphism, then the dynamical system $(M, \nu, \NN, f)$ does not have STP?\footnote{Instead of requiring the preimages of open balls to be open balls, a possible alternative restriction is to require the self-map $f$ to be quasiconformal.}

\end{document}